\numberwithin{equation}{section}
\begin{document}

\newtheorem{thm}{Theorem}[section]
\newtheorem{prop}[thm]{Proposition}
\newtheorem{lem}[thm]{Lemma}
\newtheorem{cor}[thm]{Corollary}
\newtheorem{rem}[thm]{Remark}
\newtheorem*{defn}{Definition}
\newtheorem*{note}{Note}

\newcommand{\DD}{\mathbb{D}}
\newcommand{\NN}{\mathbb{N}}
\newcommand{\ZZ}{\mathbb{Z}}
\newcommand{\QQ}{\mathbb{Q}}
\newcommand{\RR}{\mathbb{R}}
\newcommand{\CC}{\mathbb{C}}
\renewcommand{\SS}{\mathbb{S}}

\renewcommand{\theequation}{\arabic{section}.\arabic{equation}}

\newcommand{\supp}{\mathop{\mathrm{supp}}}    

\newcommand{\re}{\mathop{\mathrm{Re}}}   
\newcommand{\im}{\mathop{\mathrm{Im}}}   
\newcommand{\dist}{\mathop{\mathrm{dist}}}  
\newcommand{\spn}{\mathop{\mathrm{span}}}   
\newcommand{\ind}{\mathop{\mathrm{ind}}}   
\newcommand{\rank}{\mathop{\mathrm{rank}}}   
\newcommand{\Fix}{\mathop{\mathrm{Fix}}}   
\newcommand{\codim}{\mathop{\mathrm{codim}}}   
\newcommand{\conv}{\mathop{\mathrm{conv}}}   
\newcommand{\pa}{\partial}
\newcommand{\ve}{\varepsilon}
\newcommand{\zi}{\zeta}
\newcommand{\Si}{\Sigma}
\newcommand{\cA}{{\mathcal A}}
\newcommand{\cG}{{\mathcal G}}
\newcommand{\cH}{{\mathcal H}}
\newcommand{\cI}{{\mathcal I}}
\newcommand{\cJ}{{\mathcal J}}
\newcommand{\cK}{{\mathcal K}}
\newcommand{\cL}{{\mathcal L}}
\newcommand{\cN}{{\mathcal N}}
\newcommand{\cR}{{\mathcal R}}
\newcommand{\cS}{{\mathcal S}}
\newcommand{\cT}{{\mathcal T}}
\newcommand{\cU}{{\mathcal U}}
\newcommand{\cC}{\mathcal{C}}
\newcommand{\cM}{\mathcal{M}}
\newcommand{\De}{\Delta}
\newcommand{\cX}{\mathcal{X}}
\newcommand{\cP}{\mathcal{P}}
\newcommand{\ol}{\overline}
\newcommand{\ul}{\underline}
\newcommand{\AC}{\mathop{\mathrm{AC}}}   
\newcommand{\Lip}{\mathop{\mathrm{Lip}}}   
\newcommand{\es}{\mathop{\mathrm{esssup}}}   
\newcommand{\les}{\mathop{\mathrm{les}}}   
\newcommand{\la}{\lambda}
\newcommand{\La}{\Lambda}    
\newcommand{\de}{\delta}    
\newcommand{\fhi}{\varphi} 
\newcommand{\vro}{\varrho} 
\newcommand{\ga}{\gamma}    
\newcommand{\ka}{\kappa}   

\newcommand{\core}{\heartsuit}
\newcommand{\diam}{\mathrm{diam}}

\newcommand{\lan}{\langle}
\newcommand{\ran}{\rangle}
\newcommand{\tr}{\mathop{\mathrm{tr}}}
\newcommand{\diag}{\mathop{\mathrm{diag}}}
\newcommand{\dv}{\mathop{\mathrm{div}}}

\newcommand{\al}{\alpha}
\newcommand{\be}{\beta}
\newcommand{\Om}{\Omega}
\newcommand{\na}{\nabla}

\newcommand{\nr}{\Vert}
\newcommand{\weak} {\rightharpoonup}
 
\newcommand{\om}{\omega}
\newcommand{\si}{\sigma}
\newcommand{\te}{\theta}
\newcommand{\Ga}{\Gamma}

\title[Interpolating estimates]{Interpolating estimates with applications to some quantitative symmetry results}

\author{Rolando Magnanini} 
\address{Dipartimento di Matematica ed Informatica ``U.~Dini'',
Universit\` a di Firenze, viale Morgagni 67/A, 50134 Firenze, Italy.}
    \email{rolando.magnanini@unifi.it}
    \urladdr{http://web.math.unifi.it/users/magnanin}

\author{Giorgio Poggesi}
\address{Department of Mathematics and Statistics, The University of Western Australia, 35 Stirling Highway, Crawley, Perth, WA 6009, Australia}
    \email{giorgio.poggesi@uwa.edu.au}
\urladdr{https://research-repository.uwa.edu.au/en/persons/giorgio-poggesi-2}


\begin{abstract}
We prove interpolating estimates providing a bound for the oscillation of a function in terms of two $L^p$ norms of its gradient. They are based on a pointwise bound of a function on cones  in terms of the Riesz potential of its gradient.   The estimates hold for a general class of domains, including, e.g., Lipschitz domains. All the constants involved can be explicitly computed.
\par
As an application, we show how to use these estimates to obtain stability for Alexandrov's Soap Bubble Theorem and Serrin's overdetermined boundary value problem. The new approach results in several novelties and benefits for these problems.
\end{abstract}

\keywords{Interpolating estimates, Serrin's overdetermined problem, Alexandrov Soap Bubble Theorem, constant mean curvature, stability, quantitative estimates}
\subjclass[2010]{Primary 35A23, 35N25; Secondary 53A10}

\maketitle

\raggedbottom

\section{Introduction}
Let $\Om \subset \RR^N$ be a bounded domain. For $1 \le p \le \infty$ the
number $\nr f \nr_{p,\Om}$, will denote the $L^p$-norm of a measurable function $f : \Om \to \RR$ with
respect to the normalized Lebesgue measure $d\mu_x = dx/|\Om|$.

In Theorem \ref{thm:p<N and p=N} of the present paper, for $1 \le p \le N$ and $N<q\le\infty$, we prove the following interpolating inequalities, which hold true 
for any $f\in W^{1,q}(\Om)$:
\begin{equation}
\label{inequalities}
\max\limits_{\ol{ \Om}} f - \min\limits_{ \ol{ \Om}} f \le
c\,\begin{cases}
\nr\na f\nr_{p,\Om} &\mbox{ for } \ p>N \\
\nr\na f\nr_{N,\Om} \log\bigl(e\,\nr\na f\nr_{q,\Om}/\nr\na f\nr_{N,\Om}\bigr), \ &\mbox{ for } \ p=N, \\
\nr\na f\nr_{p,\Om}^{\al_{p,q}}\nr\na f\nr_{q,\Om}^{1-\al_{p,q}}, \ &\mbox{ for } \ 1\le p<N.
\end{cases}
\end{equation}
Here,
$$
\al_{p,q}=\frac{p\, (q-N)}{N\,(q-p)} \ \mbox{ for } \ N<q<\infty, \quad \al_{p,\infty}=\frac{p}{N}.
$$

The inequalities in \eqref{inequalities} are valid whenever $\Om$ is a bounded domain satisfying a uniform interior cone condition (see Section \ref{subsec:global estimates} for the definition). 
The constant $c$ appearing in \eqref{inequalities} may be explicitly computed. It only depends on ($N$, $p$, $q$, and) the parameters associated to the uniform  interior  cone condition (see Theorems \ref{thm:case p > N} and \ref{thm:p<N and p=N} for details).
 
The proof of \eqref{inequalities} is based on a pointwise bound on cones  for $f$ in terms of the Riesz potential of its gradient (see Lemma \ref{lem:primo passo}). When $1 \le p \le N$, \eqref{inequalities} is obtained by combining that bound with an interpolation procedure performed on cones (see Lemma~\ref{lem:stima p-q gradiente}). We stress that, for $1 \le p \le N$, \eqref{inequalities} cannot be obtained by simply combining the Sobolev-Morrey embedding with the classical interpolation of $L^p$ spaces.
When $p>N$ instead, \eqref{inequalities} could be directly deduced by the classical Morrey's inequality. However, here we offer an alternative proof, based on the arguments developed in this paper.
\par
As an application, we shall use these inequalities to give an alternative way to obtain, and even improve, certain estimates proved by the authors in \cite[Theorems 2.10 and 2.8]{MP3}. These have been a crucial ingredient to obtain the stability of the spherical configuration for Alexandrov's Soap Bubble Theorem (SBT), Serrin's and other related overdetermined problems (see, e.g., \cite{MP1, MP2, MP3, Po1, PogTesi, DPV, CPY, OO}).
More precisely, \eqref{inequalities} can be used as a substitute of \cite[Lemma 3.14]{PogTesi}
when aiming to obtain those stability results in the spirit of \cite[Theorems 2.10 and 2.8]{MP3}. We shall detail in Section \ref{sec:SBT} how this agenda can be carried out.
We emphasize that, while \cite[Lemma 3.14]{PogTesi} can only be proved for sub-harmonic functions, 
our new bounds
do not need this requirement. Thanks to this feature, they can also be useful in different and more general contexts. More on this will be clarified in forthcoming research. See also the recent paper by Scheuer \cite{Sc}.

In the remainder of this introduction, for the case of the SBT, we briefly describe the main steps of the argument that motivates the application of our interpolating inequalities.
Alexandrov's SBT states that a closed surface $\Ga$, embedded in $\RR^N$, and that has constant \textit{mean curvature} $H$ must be a sphere. Roughly speaking, by stability of the spherical configuration in this problem, we mean an inequality of the type:
$$
\mbox{measure of closeness to a sphere}\le \Psi(\nr H-H_0\nr).
$$
Here, $\Psi$ is a non-negative continuous function vanishing at $0$ and $\nr H-H_0\nr$ is the deviation of  $H$ from a reference constant $H_0$, in a suitable norm. In the literature, there are many different ways to quantify the deviations of $H$ from $H_0$ and of a surface from a sphere (we refer the reader to the works \cite{Ma}, \cite{PogTesi}, and \cite{MP3} for a quite exhaustive list of references).  
It is clear that the weaker the norm $\nr H-H_0\nr$ is and the stronger the distance of $\Ga$ from a sphere is, the better the estimate is. On the other hand, in such a weak-strong setting, it may be difficult to obtain for $\Psi$ the most desirable linear profile: $\Psi(\si)=c\,\si$. Here,  $c$ is some constant depending on some  geometric parametes of the surface, easy to compute if possible. When this occurs, the optimality can be proved by considering sequences of ellipsoids.
\par
In the works \cite{MP1}-\cite{MP3}, we consider the boundary $\Ga$ of a bounded domain $\Om$, we set $H_0$ to be the ratio $|\Ga|/N |\Om|$, and we adopt an $L^2(\Ga)$ (or even $L^1(\Ga)$) deviation of $H$ from $H_0$. Also, we measure the distance of $\Ga$ from a sphere, by the quantity $\rho_e-\rho_i$, where $\rho_i$ and $\rho_e$, $\rho_i\le\rho_e$, are the radii of the best spherical annulus  containing $\Ga$. This will be given by $B_{\rho_e}(z)\setminus\ol{B_{\rho_i}(z)}$ for some $z\in\Om$. In other words, we obtained a  bound of this type:
$$
\rho_e-\rho_i\le c\,\Psi\left(\nr H-H_0\nr_{L^2(\Ga)}\right).
$$
In this setting, in \cite{MP3} we obtained  a linear profile for $\Psi$ in low dimension ($N=2, 3$) and a H\"older profile with exponent $2/(N-2)$, for $N\ge 5$. For the threshold case $N=4$, we got, in a sense,  a profile ``arbitrarily close to a linear one'' (see Remark \ref{rem:old-stability} or \cite{MP3}, for details).

\par
In Section \ref{sec:SBT} of this paper, for surfaces of class $C^2$, we show that the interpolating bounds obtained in Section \ref{sec: interpolating estimates} help to improve the profile for $N\ge 4$. In fact, in Theorem \ref{thm:SBT-improved-stability}, for $N=4$ we improve the older estimate (that was $\Psi(\si)=c_\ve\,\si^{1-\ve}$, for any fixed $\ve>0$) to a sharper and more plausible one: $\Psi(\si)=c\, \si \log(1/\si)$.  Moreover, when $N\ge 5$, we are able to upgrade the profile $\Psi(\si)=c\, \si^{2/(N-2)}$ to $\Psi(\si)=c_\ve\, \si^{4/N-\ve}$, for any fixed $\ve>0$. This profile can be further improved to $\Psi(\si)=c\,\si^{4/N}$, if we consider surfaces of class $C^{2,\ga}$, $1<\ga\le 1$.  For $2\le N \le 3$, we just show that the new bounds in \eqref{inequalities} provide an alternative way to recover the optimal profile previously obtained in \cite{MP2, MP3}.
\par
Another novelty of this paper is that we show that our new improvements also hold if we enforce the quantity $\rho_e-\rho_i$ by replacing it with the stronger deviation:
$$
\rho_e-\rho_i+R\,\left\nr \nu-\frac{\na Q^z}{R}\right\nr_{2,\Ga}
$$
Here, $R=1/H_0$, $\nu$ is the exterior unit normal vector to $\Ga$, and $Q^z$ is defined by
\begin{equation}
\label{quadratic}
Q^z(x)=\frac{|x-z|^2}{2} \ \mbox{ for } \ x, z\in\RR^N.
\end{equation}
(Also in this case, the relevant norm is defined in the the corresponding normalized measure $d S_x/|\Ga|$.)
\par
Thus, the smallness of this new measure of closeness to a sphere  tells us not only that $\Ga$ is uniformly close to a sphere, but also that the Gauss map of $\Ga$ is quantitatively close \textit{in the average} to that of the same sphere. Therefore, all in all, in Theorem \ref{thm:SBT-improved-stability}, we enhance the last up-to-date bounds of \cite{MP3} for the stability of the SBT as follows:
\begin{equation}
\label{new-bound}
\rho_e-\rho_i+R\,\left\nr \nu-\frac{\na Q^z}{R}\right\nr_{2,\Ga}\le c\,\Psi\left(\nr H-H_0\nr_{L^2(\Ga)}\right),
\end{equation}
where 
\begin{equation}
\label{def-Psi-intro}
\Psi(\si)=\begin{cases}
\si \ &\mbox{ if } \ N=2, 3, \\
\si \max\left[\log(1/\si),1\right] \ &\mbox{ if } \ N=4, \\
\si^{\tau}  \ &\mbox{ if } \ N\ge 5,
\end{cases}
\end{equation}
where $\tau=4/N$ if $\Ga$ is of class $C^{2,\ga}$, $1<\ga\le 1$. If $\Ga$ is of class $C^2$, instead, when $N\ge 5$, we obtain that, for any sufficiently small $\ve>0$, there exists a constant $c=c_\ve$  such that \eqref{new-bound}-\eqref{def-Psi-intro} holds with $\tau=4/N-\ve$.
The constant $c$ only depends on $N$, the diameter $d_\Om$ of $\Om$, and parameters associated with the assumed regularity of $\Ga$. If $\Ga$ is of class $C^2$, these are the radii $r_i$ and $r_e$ of the uniform interior and exterior ball condition (see Section \ref{sec:SBT}). If $\Ga$ is of class $C^{2,\ga}$, $c$ depends on a suitable modulus of $C^{2,\ga}$-continuity for $\Ga$.
For details, see Theorem \ref{thm:SBT-improved-stability} and Remark \ref{rem:schauder-calderon-zygmund}. We stress that, for the second summand on the left-hand side of \eqref{new-bound}, we can actually obtain an optimal linear profile of stability, in every dimension (see \eqref{eq:SBT mappa Gauss stability}).
\par
We spend a few final words to explain how the bounds derived in Section \ref{sec: interpolating estimates} come into play to obtain \eqref{new-bound}. To this aim, we let $u\in C^1(\ol{\Om})\cap C^2(\Om)$ be the solution of the problem:
$$
\De u=N \ \mbox{ in } \ \Om, \quad u=0 \ \mbox{ on } \ \Ga.
$$ 
Also, we define the harmonic function $h=u-Q^z$. Notice that, if $z\in\Om$, then we have that
$$
\frac12 \left(\frac{|\Om|}{|B|}\right)^{1/N} (\rho_e-\rho_i)\le \frac12\,(\rho_e^2-\rho_i^2)=
\max_{\Ga}h-\min_{\Ga}h.
$$
Here, $B$ is a unit ball in $\RR^N$. Thus, a bound for the term $\rho_e-\rho_i$ in \eqref{new-bound} descends from the following identity
\begin{equation}
\label{H-fundamental}
\frac1{N-1}\int_{\Om}  |\na ^2 h|^2 dx+
\frac1{R}\,\int_\Ga (u_\nu-R)^2 dS_x = 
\int_{\Ga}(H_0-H)\, (u_\nu)^2 dS_x,
\end{equation}
which was proved in \cite{MP1}. In fact, since the right-hand side can be easily bounded in terms of the $L^2(\Ga)$ norm of $H-H_0$, then the desired bound for $\rho_e-\rho_i$ can be obtained if we can control the oscillation of $h$ on $\Ga$
in terms of the first summand in \eqref{H-fundamental}. This goal is achieved by combining the bounds \eqref{inequalities} (applied to $h$ and its gradient) with some Poincar\'e-type inequality. 
\par
The second summand on the left-hand side of \eqref{new-bound} can instead be estimated by observing that
$$
|R\,\nu-\na Q^z|\le |R\,\nu-u_\nu\,\nu|+|\na u-\na Q^z|= 
|R-u_\nu|+|\na h| \ \mbox{ on } \ \Ga.
$$
The two quantities on the rightest-hand side can be estimated in $L^2(\Ga)$-norm by means of \eqref{H-fundamental} and, again, by some inequalities derived in \cite{MP3}. These involve a trace-type formula, 
$$
\int_\Ga |\na h|^2 dS_x\le c\int_\Om (-u)\, |\na^2 h|^2\,dx,
$$
and another identity (stated in \cite{MP1} and proved in \cite{MP2}):
\begin{equation}
\label{identity-MP}
\int_\Om (-u)\, |\na^2 h|^2\,dx=\frac12\,\int_\Ga (u_\nu^2-R^2)\,h_\nu\,dS_x.
\end{equation}
\par
This last identity, immediately gives radial symmetry for $\Om$ in Serrin's overdetermined problem (that prescribes that $u_\nu$ is constant on $\Ga$).  Together with  the arguments used to obtain \eqref{new-bound}, \eqref{identity-MP} will also help us to upgrade an analogous stability bound for radial symmetry in Serrin's problem.
This task will be accomplished in Theorem \ref{thm:Improved-Serrin-stability}.

\smallskip

\section{Interpolating estimates for Sobolev functions}
\label{sec: interpolating estimates}
Let $\SS^{N-1}$ be the unit sphere in the Euclidean space $\RR^N$, $N\ge 2$. For $\te\in [0,\pi/2]$ and $e\in\SS^{N-1}$, we set
\begin{equation*}\label{def:superficie cono sfera}
\cS_\te = \{\om \in \SS^{N-1} : \cos\te < \lan \om , e \ran \}.
\end{equation*}
This is a spherical cap with axis $e$ and opening width $\te$. We also denote by 
\begin{equation*}
\label{def:cono generica altezza}
\cC_{x, a} = \{ x + a \om : \om \in \cS_\te, \, 0 < s < a \},
\end{equation*}
the finite right spherical cone with vertex at $x$, axis in some direction $e$, and height $a>0$.
In what follows,
$| \cC_{x,a} |$ and $| \cS_\te |$ will denote
indifferently the $N$-dimensional Lebesgue measure of $\cC_{x,a}$ and the $(N-1)$-dimensional surface measure of $\cS_\te$.

\subsection{Pointwise estimates on cones}\label{Subsec:Pointwise estimates on cones}
We start by proving some useful pointwise estimates in cones (see also \cite{Ad}). In what follows, we set $\cC_x=\cC_{x,a}$ and use the normalized Lebesgue measure $d\mu_y=dy/|E|$ for any measurable set $E\subset\RR^N$ of finite measure. 
\begin{lem}
\label{lem:primo passo}
For any $f \in C^1(\ol{\cC}_x)$, it holds that
\begin{equation}
\label{eq:lemma1-1}
\left|f(x)-f_{\cC_x}\right| \le   \int_{\cC_x}\frac{ | \na f ( y )| }{| y - x |^{N-1}} \, \frac{a^N-|y-x|^N}{N}\,d\mu_y.
\end{equation}
In particular, we have that
\begin{equation}
\label{eq:lemma1-2}
\left|f(x)-f_{\cC_x}\right|\le   \frac{a^N}{N }\int_{\cC_{x}} \frac{ | \na f(y) | }{| y - x |^{N-1}} \,d\mu_y.
\end{equation}
\end{lem}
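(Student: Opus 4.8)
The plan is to prove the pointwise bound \eqref{eq:lemma1-1} by the standard device of integrating the directional derivative of $f$ along rays emanating from the vertex $x$, and then averaging over the spherical cap $\cS_\te$ that defines the cone. First I would parametrize points of $\cC_x$ in polar coordinates centered at $x$: write $y = x + s\,\om$ with $\om \in \cS_\te$ and $0 < s < a$, so that $dy = s^{N-1}\,ds\,d\cH^{N-1}(\om)$. For each fixed direction $\om$ and each radius $s$, the fundamental theorem of calculus applied to $t \mapsto f(x+t\om)$ on $[0,s]$ gives $f(x) = f(x+s\om) - \int_0^s \lan \na f(x+t\om), \om\ran\,dt$, hence $|f(x) - f(x+s\om)| \le \int_0^s |\na f(x+t\om)|\,dt$.

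Next I would integrate this inequality against the weight $s^{N-1}\,ds\,d\cH^{N-1}(\om)$ over $0<s<a$, $\om\in\cS_\te$, and divide by $|\cC_x| = |\cS_\te|\,a^N/N$. The left-hand side becomes $|f(x) - f_{\cC_x}|$ after recognizing that $\frac{1}{|\cC_x|}\int_{\cS_\te}\int_0^a f(x+s\om)\,s^{N-1}\,ds\,d\cH^{N-1}(\om)$ is exactly the average $f_{\cC_x}$; I should be slightly careful here and use the triangle inequality for integrals, i.e.\ $|f(x) - f_{\cC_x}| = \bigl|\,\frac{1}{|\cC_x|}\int_{\cC_x}(f(x)-f(y))\,dy\,\bigr| \le \frac{1}{|\cC_x|}\int_{\cC_x}|f(x)-f(y)|\,dy$, and then apply the ray estimate inside. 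For the right-hand side, the key computation is to exchange the order of integration in the double integral $\int_0^a \Bigl(\int_0^s |\na f(x+t\om)|\,dt\Bigr)s^{N-1}\,ds$: swapping $s$ and $t$ turns this into $\int_0^a |\na f(x+t\om)|\Bigl(\int_t^a s^{N-1}\,ds\Bigr)dt = \int_0^a |\na f(x+t\om)|\,\frac{a^N - t^N}{N}\,dt$. Rewriting $|\na f(x+t\om)|\,dt = |\na f(y)|\,t^{-(N-1)}\cdot t^{N-1}\,dt$ and folding the $s^{N-1}$ back in via $t = |y-x|$ recovers precisely the kernel $|y-x|^{-(N-1)}\,\frac{a^N - |y-x|^N}{N}$ in \eqref{eq:lemma1-1}, after dividing by $|\cC_x|$ and using the normalized measure $d\mu_y = dy/|\cC_x|$.

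Finally, \eqref{eq:lemma1-2} is immediate from \eqref{eq:lemma1-1} by discarding the nonnegative term $-|y-x|^N$ in the numerator, i.e.\ using $a^N - |y-x|^N \le a^N$ for $y\in\cC_x$. I do not expect any serious obstacle: the only points requiring mild care are the bookkeeping of the normalization constants (keeping track of the factor $|\cS_\te|$ and its cancellation, and of the $1/N$ from $\int_0^a s^{N-1}\,ds = a^N/N$) and the justification of Fubini's theorem for the nonnegative integrand $|\na f(x+t\om)|\,s^{N-1}$, which is automatic since $f \in C^1(\ol{\cC}_x)$ makes $|\na f|$ bounded on the compact cone. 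The substitution of variable $t \leftrightarrow |y-x|$ to pass from the polar form back to the intrinsic integral over $\cC_x$ is where one must be attentive, but it is a routine change-of-variables rather than a genuine difficulty.
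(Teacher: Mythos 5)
Your proof is correct and follows essentially the same route as the paper's: polar coordinates centered at the vertex, the fundamental theorem of calculus along rays, and a Fubini swap of the $(s,t)$-integration to produce the kernel $\frac{a^N-|y-x|^N}{N}$. The only cosmetic difference is that the paper rewrites the inner ray integral as an integral over the truncated cone $\cC_{x,s}$ before applying Fubini, whereas you swap the two one-dimensional integrals $\int_0^a\int_0^s$ directly and then pass to the $y$-variable at the end — the same computation, bookkept in a different order.
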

\begin{proof}
By the change of variables $y = x + s\, \om$ for $s\in (0,a)$ and $\om\in\cS_\te$, we write:
\begin{multline*}
f(x)-f_{\cC_x}=f(x)-\int_{\cC_x} f\,d\mu_y=\frac1{|\cC_x|}\int_{\cC_x}[f(x)-f(y)]\,dy= \\
\frac1{|\cC_x|}\int_0^a s^{N-1}\left\{\int_{\cS_\te}[f(x)-f(x+s\,\om)]\,dS_\om\right\} ds,
\end{multline*}
where $dS_\om$ denotes the surface element on $\SS^{N-1}$.
Next, the fundamental theorem of calculus gives:
$$
f(x)-f(x+s\,\om)=-\int_0^s \om\cdot\na f(x+t\,\om)\,dt.
$$
Thus, we can infer that
\begin{multline*}
\left|f(x)-f_{\cC_x}\right|\le \frac1{|\cC_x|}\int_0^a \int_{\cS_\te} s^{N-1} \left[\int_0^s|\na f(x+t\,\om)|\,dt\right]\,dS_\om\,ds= \\
\frac1{|\cC_x|}\int_0^a s^{N-1} \int_{\cS_\te} \left[\int_0^s\frac{|\na f(x+t\,\om)|}{|x+t\,\om-x|^{N-1}}\,t^{N-1}\,dt\right]\,dS_\om\,ds= \\
\frac1{|\cC_x|}\int_0^a s^{N-1} \left[\int_{\cC_{x,s}} \frac{|\na f(y)|}{|y-x|^{N-1}}\,dy \right] ds.
\end{multline*}

Now, by an application of Fubini's theorem we obtain that
$$
\int_0^a\left(\int_{\cC_{x,s}} \frac{ | \na f ( y )| }{| y - x |^{N-1}} \, dy\right) ds=
\int_{\cC_{x}} \frac{ | \na f ( y )| }{| y - x |^{N-1}} \, \frac{a^N-|y-x|^N}{N}\,dy.
$$
Thus, \eqref{eq:lemma1-1} and \eqref{eq:lemma1-2} easily follow.
\end{proof}

As a corollary, we have the following {\it Morrey-Sobolev-type} inequality. The relevant Lebesgue norms are defined with respect to the normalized measure $d\mu_y$.

\begin{cor}
\label{cor:MorreySobolev}
If $N<p \le \infty$ and $f \in C^1(\ol{\cC}_x)$, we have that
\begin{equation}
\label{eq:MorreySobolev}
\left|f(x)-f_{\cC_x}\right| \le  \frac{a}{N}\,\be\left(1-\frac{p'}{N'}, p'+1\right)^{1/p'} \| \na f \|_{p, \cC_x},
\end{equation}
where $\be(\xi,\eta)$ denotes {\rm Euler's beta function}. When $p=\infty$, the inequality is that obtained by taking the limits as $p\to\infty$ of the relevant quantities.
\end{cor}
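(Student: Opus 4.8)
The plan is to deduce \eqref{eq:MorreySobolev} from the sharper pointwise estimate \eqref{eq:lemma1-1} of Lemma \ref{lem:primo passo} rather than from the cruder \eqref{eq:lemma1-2}; retaining the factor $a^N-|y-x|^N$ is exactly what yields the beta-function constant (and not, say, the larger constant $(1-p'/N')^{-1/p'}$ that \eqref{eq:lemma1-2} would produce, since $\be(1-p'/N',p'+1)\le (1-p'/N')^{-1}$ because $p'+1\ge 2$). First I would apply H\"older's inequality on $\cC_x$, with conjugate exponents $p$ and $p'$, to the right-hand side of \eqref{eq:lemma1-1}:
$$
\left|f(x)-f_{\cC_x}\right| \le \frac1N \left(\int_{\cC_x} \left(\frac{a^N-|y-x|^N}{|y-x|^{N-1}}\right)^{p'} d\mu_y\right)^{1/p'} \| \na f \|_{p, \cC_x}.
$$
Thus the whole matter reduces to evaluating the radial integral $I := \int_{\cC_x} \bigl((a^N-|y-x|^N)\,|y-x|^{-(N-1)}\bigr)^{p'}\,d\mu_y$.

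To compute $I$ I would switch to the polar-type coordinates $y = x + s\,\om$, with $s\in(0,a)$ and $\om\in\cS_\te$, already used in the proof of Lemma \ref{lem:primo passo}, recalling that $dy = s^{N-1}\,ds\,dS_\om$ and $|\cC_x| = |\cS_\te|\,a^N/N$. This collapses $I$ to the one-dimensional integral $\frac{N}{a^N}\int_0^a (a^N-s^N)^{p'}\,s^{(N-1)(1-p')}\,ds$. The substitution $t = (s/a)^N$ then rewrites it as $a^{p'}\int_0^1 (1-t)^{p'}\,t^{-p'/N'}\,dt$, where one uses $\frac{N-1}{N} = \frac1{N'}$ to identify the exponent of $t$; by definition of Euler's beta function this last integral is exactly $\be\!\left(1-\frac{p'}{N'},\,p'+1\right)$. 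Hence $I^{1/p'} = a\,\be\!\left(1-\frac{p'}{N'},\,p'+1\right)^{1/p'}$, and plugging this back into the displayed inequality yields \eqref{eq:MorreySobolev}.

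The only point requiring attention is the convergence of $I$ near the vertex: the integrand in $\int_0^a(a^N-s^N)^{p'}s^{(N-1)(1-p')}\,ds$ behaves like $s^{(N-1)(1-p')}$ as $s\to0^+$, so finiteness demands $(N-1)(p'-1)<1$, i.e.\ $p'<N'$, i.e.\ $p>N$ — which is precisely the standing hypothesis (and the same condition makes $1-p'/N'>0$, so the beta function is finite). I expect this to be the main, though modest, obstacle; the remainder is bookkeeping. Finally, the case $p=\infty$ is obtained by letting $p\to\infty$ (equivalently $p'\to1$): either one invokes continuity of $\be$, or one simply bounds $|\na f(y)|\le\|\na f\|_{\infty,\cC_x}$ directly in \eqref{eq:lemma1-1} and computes $\frac{N}{a^N}\int_0^a(a^N-s^N)\,ds = \frac{a\,N^2}{N+1}$; in both ways the constant becomes $\frac{a\,N}{N+1}$.
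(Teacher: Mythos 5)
Your proposal is correct and follows the paper's own argument essentially verbatim: apply H\"older's inequality to the sharper bound \eqref{eq:lemma1-1}, pass to polar coordinates, substitute $t=(s/a)^N$, and identify the resulting integral as $a^{p'}\,\be\bigl(1-\tfrac{p'}{N'},p'+1\bigr)$, with the restriction $p>N$ arising exactly from the finiteness of the beta function. Your added checks (convergence at the vertex, and the explicit constant $\tfrac{aN}{N+1}$ for $p=\infty$) are consistent with the paper's statement.
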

\begin{proof}
The desired result follows from \eqref{eq:lemma1-1} by applying H\"older's inequality to the right-hand side and the calculation:
\begin{multline*}
\int_{\cC_x} \left(\frac{a^N-|y-x|^N}{|y-x|^{N-1}}\right)^{p'} d\mu_y=
\frac{|\cS_\te|}{|\cC_x|} \int_0^r \left(\frac{a^N-s^N}{s^{N-1}}\right)^{p'} s^{N-1} ds= \\
a^{p'} \int_0^1 (1-t)^{p'} t^{-\frac{N-1}{N} p'} dt=
a^{p'} \be\left(1-\frac{N-1}{N}\,p', p'+1\right).
\end{multline*}
Since $\be(\xi,\eta)$ is well-defined only if $\xi, \eta>0$, we get the restriction $p>N$. As already mentioned, the case $p=\infty$ can be derived by taking the limit as $p\to\infty$.
\end{proof}

\subsection{Global estimates for the oscillation of functions}\label{subsec:global estimates}
Let $\Om \subset \RR^N$ be a bounded domain (i.e., a connected bounded open set) with boundary $\Ga$.
Given $a>0$ and $\te\in [0,\pi/2]$, we say that $\Om$ satisfies the $(\te, a)$-uniform interior cone condition, if for every $x \in \ol{\Om}$ there exists a cone $\cC_{x}$ with opening width $\te$ and height $a$, such that $\cC_{x} \subset\Om$
and
$\ol{\cC}_{x} \cap \Ga = \{ x \}$, whenever $x \in \Ga$.
The following result easily follows from Corollary  \ref{cor:MorreySobolev}.

\begin{cor}
\label{cor:MorreySobolev-Omega}
Let $N<p \le \infty$ and $\Om\subset\RR^N$ be a bounded domain that satsfies the  uniform interior $(\te,a)$-cone property.
For every $x\in\ol{\Om}$ and $f\in W^{1,p}(\Om)$, we have that
\begin{equation}
\label{eq:MorreySobolev-Omega}
\left|f(x)-f_\Om\right|\le k(N,p,\te)\,a^{1-N/p}\, |\Om|^{1/p} \nr\na f\nr_{p,\Om},
\end{equation}
for some constant $k(N,p,\te)$ only depending on $N, p$, and $\te$.
\end{cor}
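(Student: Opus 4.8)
The plan is to deduce Corollary \ref{cor:MorreySobolev-Omega} from the pointwise-on-cones estimate in Corollary \ref{cor:MorreySobolev} by carefully tracking how the normalized norm on a cone relates to the normalized norm on $\Om$. Fix $x \in \ol{\Om}$ and let $\cC_x = \cC_{x,a} \subset \Om$ be the cone furnished by the uniform interior $(\te,a)$-cone condition. First I would handle the density issue: \eqref{eq:MorreySobolev} is stated for $f \in C^1(\ol{\cC}_x)$, so for general $f \in W^{1,p}(\Om)$ with $p > N$ one uses that $W^{1,p}(\Om)$ embeds into $C^{0,1-N/p}(\ol{\Om})$ (so $f(x)$ makes pointwise sense) and approximates $f$ by smooth functions; alternatively, one invokes \eqref{eq:lemma1-1}--\eqref{eq:lemma1-2} directly, which only require $\na f \in L^p$. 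Either way this is routine.

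The core computation is converting norms. Corollary \ref{cor:MorreySobolev} gives
\begin{equation*}
\left|f(x)-f_{\cC_x}\right| \le \frac{a}{N}\,\be\!\left(1-\frac{p'}{N'},\,p'+1\right)^{1/p'} \nr\na f\nr_{p,\cC_x},
\end{equation*}
where $\nr\na f\nr_{p,\cC_x} = \bigl(\frac{1}{|\cC_x|}\int_{\cC_x}|\na f|^p\,dy\bigr)^{1/p}$. Since $\cC_x \subset \Om$, we have $\int_{\cC_x}|\na f|^p \le \int_\Om |\na f|^p = |\Om|\,\nr\na f\nr_{p,\Om}^p$, hence $\nr\na f\nr_{p,\cC_x} \le (|\Om|/|\cC_x|)^{1/p}\,\nr\na f\nr_{p,\Om}$. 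Now $|\cC_x| = |\cS_\te|\,a^N/N$ depends only on $N$, $\te$, $a$, so $(|\Om|/|\cC_x|)^{1/p} = (N/|\cS_\te|)^{1/p}\,a^{-N/p}\,|\Om|^{1/p}$, which produces exactly the factor $a^{1-N/p}|\Om|^{1/p}$ once combined with the leading $a/N$. Collecting all $N,p,\te$-dependent pieces into one constant gives
\begin{equation*}
\left|f(x)-f_{\cC_x}\right| \le k(N,p,\te)\,a^{1-N/p}\,|\Om|^{1/p}\,\nr\na f\nr_{p,\Om}.
\end{equation*}

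The remaining step is to pass from the cone average $f_{\cC_x}$ to the global average $f_\Om$, and this is the only point requiring a genuine (if mild) idea: connectedness of $\Om$ is used here. Pick any fixed reference point $x_0 \in \Om$ with its cone $\cC_{x_0}$; then $|f_{\cC_x} - f_{\cC_{x_0}}|$ must be controlled. One clean route: for \emph{any} two points $x,x' \in \ol{\Om}$ the triangle inequality gives $|f(x)-f(x')| \le |f(x)-f_{\cC_x}| + |f_{\cC_x}-f_{\cC_{x'}}| + |f_{\cC_{x'}}-f(x')|$, and the middle term is bounded by averaging $|f(x)-f(x')|$ over $y\in\cC_x$, $y'\in\cC_{x'}$, which reduces everything to bounding $|f(y)-f(y')|$ for $y,y' \in \Om$ — i.e.\ to a Poincaré-type control of the oscillation of $f$, or equivalently one first proves $\max_{\ol\Om} f - \min_{\ol\Om} f \le C\,a^{1-N/p}|\Om|^{1/p}\nr\na f\nr_{p,\Om}$ by a chaining argument (covering a path from $x$ to $x'$ in $\Om$ by finitely many overlapping cones, the number depending on the cone parameters and the domain) and then notes $|f(x)-f_\Om| \le \max_{\ol\Om}f - \min_{\ol\Om}f$. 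The main obstacle is precisely making this chaining quantitative so that the constant depends \emph{only} on $N,p,\te$ (and $a$, $|\Om|$) and not on finer features of $\Om$; since the statement only claims dependence on $N,p,\te$ with the explicit $a$ and $|\Om|$ factors pulled out, I expect the authors' intended argument is the short one above — bound $|f(x) - f_\Om|$ by first bounding $\es_\Om f - \min_{\ol\Om} f$ type quantities — rather than a delicate covering, and I would follow that, absorbing the elementary geometric constants from the cone geometry into $k(N,p,\te)$.
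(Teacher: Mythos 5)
Your first two steps reproduce exactly the paper's intended argument: apply Corollary \ref{cor:MorreySobolev} on the cone, then convert the normalized norm on $\cC_x$ to that on $\Om$ using $|\cC_x| = |\cS_\te|\,a^N/N$, obtaining
$|f(x)-f_{\cC_x}| \le k(N,p,\te)\,a^{1-N/p}|\Om|^{1/p}\nr\na f\nr_{p,\Om}$.
(The paper phrases this as applying \eqref{eq:MorreySobolev} to $g=f-f_\Om+f_{\cC_x}$, but since $\na g=\na f$ and $g(x)-g_{\cC_x}=f(x)-f_{\cC_x}$ --- the added constants cancel --- this yields precisely the same cone-average bound.) The third step you flag, replacing $f_{\cC_x}$ by $f_\Om$, is indeed the crux, and your instinct that this is the genuinely nontrivial point is correct: the paper's displayed chain simply writes $|f(x)-f_\Om|$ on the left of what the lemma actually delivers, with no intermediate argument, so the paper's own proof does not close the gap you identified.

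Moreover, the gap cannot be closed with a constant depending only on $N,p,\te$, so your worry that a chaining argument would drag in extra geometric dependence is well founded. Take $\Om=(0,L)\times(0,1)^{N-1}$ and $f(y)=y_1$. Then $\nr\na f\nr_{p,\Om}=1$ and $|\Om|=L$, the $(\te,a)$-cone condition holds with fixed $\te$ and $a\le 1$ uniformly in $L$, yet $|f(L,\tfrac12,\dots,\tfrac12)-f_\Om|=L/2$ while the claimed right-hand side is of order $L^{1/p}$; the inequality therefore fails for $L$ large. (More starkly: a $(\te,a_0)$-cone condition implies a $(\te,a)$-cone condition for every $0<a\le a_0$, and since $1-N/p>0$, sending $a\to 0$ in \eqref{eq:MorreySobolev-Omega} would force $f\equiv f_\Om$; the cone-average version is immune because $f_{\cC_x}\to f(x)$ as $a\to 0$.) The same defect is then inherited by Theorem \ref{thm:case p > N}, whose proof invokes this corollary at $x_M$ and $x_m$. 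To repair the statement one must either keep $f_{\cC_x}$ on the left, or allow the constant to depend on an extra shape quantity such as $d_\Om/a$ (the constants in the applications of Sections 3--4 already carry a $d_\Om$ dependence), or restrict to domains whose cones $\cC_x$ can all be taken to contain a fixed interior ball.
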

\begin{proof}
For any $x\in\ol{\Om}$, there is a cone $\cC_x$ contained in $\Om$.
Hence, we apply \eqref{eq:MorreySobolev} to the function $f-f_\Om+f_{\cC_x}$ and infer that
\begin{multline*}
\left|f(x)-f_\Om\right|\le\frac{a}{N}\,\be\left(1-\frac{p'}{N'}, p'+1\right)^{1/p'} \| \na f \|_{p, \cC_x}\le \\
\frac{a}{N}\,\be\left(1-\frac{p'}{N'}, p'+1\right)^{1/p'}\left(\frac{|\Om|}{|\cC_x|}\right)^{1/p} \| \na f \|_{p, \Om}\le \\
\frac{\be\left(1-\frac{p'}{N'}, p'+1\right)^{1/p'}}{N^{1/p'}\, |\cS_\te|^{1/p}} \, a^{1-N/p}\, |\Om|^{1/p} \nr\na f\nr_{p,\Om}.
\end{multline*}
In the second inequality, we use the monotonicity of Lebesgue's integral with respect to set inclusion. 
\end{proof}

In this section, we aim to derive inequalities that bound from above the oscillation on $\ol{\Om}$ of a function $f$ with the $L^p$-norm  of its gradient on $\Om$.

\begin{thm}[The case $p>N$]
\label{thm:case p > N}
Set $N<p\le\infty$. Let $\Om \subset \RR^N$ be a bounded domain satisfying the $(\te, a)$-uniform interior cone condition. 
\par
There exists a constant $k(N,p,\te)$ only depending on $N, p$, and $\te$ such that, for any $f \in W^{1,p}(\Om)$, it holds that
\begin{equation}
\label{eq:p>N theo stima osc grad}
\max_{\ol{\Om}} f - \min_{\ol\Om} f  \le  k(N,p,\te)\, a^{1-N/p}\, |\Om|^{1/p} \,  \nr \na f \nr_{p, \Om}.
\end{equation}
\end{thm}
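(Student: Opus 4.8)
The plan is to reduce the global oscillation bound to the pointwise estimate from Corollary~\ref{cor:MorreySobolev-Omega}, using that $\max_{\ol\Om}f$ and $\min_{\ol\Om}f$ are attained (since $f\in W^{1,p}(\Om)$ with $p>N$ is, after modification on a null set, continuous on $\ol\Om$ by the Morrey--Sobolev embedding, and $\ol\Om$ is compact). First I would pick points $x_{+},x_{-}\in\ol\Om$ with $f(x_{+})=\max_{\ol\Om}f$ and $f(x_{-})=\min_{\ol\Om}f$. Then I would write the oscillation as a telescoping difference through the mean value $f_\Om$:
\begin{equation*}
\max_{\ol\Om}f-\min_{\ol\Om}f = f(x_{+})-f(x_{-}) = \bigl(f(x_{+})-f_\Om\bigr)-\bigl(f(x_{-})-f_\Om\bigr)\le \left|f(x_{+})-f_\Om\right|+\left|f(x_{-})-f_\Om\right|.
\end{equation*}

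Next I would apply Corollary~\ref{cor:MorreySobolev-Omega} to each of the two terms. Each is bounded by $k(N,p,\te)\,a^{1-N/p}\,|\Om|^{1/p}\,\nr\na f\nr_{p,\Om}$, so the sum is bounded by $2\,k(N,p,\te)\,a^{1-N/p}\,|\Om|^{1/p}\,\nr\na f\nr_{p,\Om}$. Renaming the constant ($k\to 2k$, which still depends only on $N$, $p$, $\te$) yields exactly \eqref{eq:p>N theo stima osc grad}. The only mild subtlety, which I would address first, is the reduction from $W^{1,p}$ to $C^1(\ol{\cC}_x)$ underlying the earlier results: one invokes that Corollary~\ref{cor:MorreySobolev-Omega} is already stated for $f\in W^{1,p}(\Om)$ (its proof tacitly uses density of $C^1(\ol\Om)$ in $W^{1,p}(\Om)$ together with continuity of the continuous representative), so I may simply quote it.

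The main obstacle is essentially bookkeeping rather than substance: confirming that the extrema are genuinely attained at points of $\ol\Om$ at which the pointwise inequality \eqref{eq:MorreySobolev-Omega} is valid --- i.e., that we are evaluating the continuous representative of $f$ --- and that the cone condition supplies an admissible interior cone $\cC_x\subset\Om$ for boundary points as well (this is built into the definition of the uniform interior $(\te,a)$-cone condition, with $\ol{\cC}_x\cap\Ga=\{x\}$). Once this is in place, the estimate follows immediately from Corollary~\ref{cor:MorreySobolev-Omega} and the triangle inequality, with no further computation needed; in particular the explicit value of the constant can be read off as $2\,\be\!\left(1-\tfrac{p'}{N'},p'+1\right)^{1/p'}\!\big/\bigl(N^{1/p'}|\cS_\te|^{1/p}\bigr)$, taking limits as $p\to\infty$ in the case $p=\infty$.
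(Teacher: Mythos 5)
Your proposal is correct and follows essentially the same route as the paper: both pick points where the extrema are attained, split the oscillation through the mean value $f_\Om$, and apply Corollary~\ref{cor:MorreySobolev-Omega} twice, with the constant doubling absorbed into $k(N,p,\te)$. Your additional remarks on continuity of the representative and the density argument are accurate but not needed beyond what the paper states.
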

\begin{proof}
Notice that the oscillation of $f$ at the left-hand side of \eqref{eq:p>N theo stima osc grad} is well defined, since $f$ is continuous on $\ol{\Om}$. 
\par
Let $x_m, x_M\in\ol{\Om}$ be points at which $f$ attains its minimum and maximum. Then, we have that
$$
\max_{\ol{\Om}} f - \min_{\ol\Om} f  \le f(x_M)-f_\Om+f_\Om-f(x_m)
$$
and we conclude by applying twice Corollary \ref{cor:MorreySobolev-Omega}.
\end{proof}

\medskip

It is clear that the proof of Corollary \ref{cor:MorreySobolev} fails when $1\le p \le N$, because of  the singularity at $x$. However,
in this case, we can still obtain a slightly different estimate by means of an interpolation procedure, if information on higher integrability of the gradient of $f$ is available.

\begin{lem}
\label{lem:stima p-q gradiente}
Let $f \in C^1(\ol{\cC}_x)$. Let $1\le p\le N$, $N<q\le\infty$, and set
\begin{equation}\label{eq:def alpha pq}
\al_{p,q}=\frac{p\, (q-N)}{N\,(q-p)}.
\end{equation}
\begin{enumerate}[(i)]
\item
If $1\le p<N$, we have that
\begin{equation}
\label{eq:interpol-p-q-p<N}
a^{N-1} \int_{\cC_x} \frac{ | \na f ( y )| }{| y - x |^{N-1}} \, d\mu_y \le k_{N,p,q}\,\nr\na f\nr_{q,\cC_x}^{1-\al_{p,q}} \, \nr \na f \nr_{p, \cC_x }^{\al_{p,q}} ,
\end{equation}
for some positive constant $k_{N,p,q}$ only depending on $N, p,$ and $q$.

\item
If $p=N$. we have that
\begin{equation}
\label{eq:interpol-p-N}
a^{N-1}\ \int_{\cC_x} \frac{ | \na f ( y )| }{| y - x |^{N-1}} \, d\mu_y \le \frac{q}{q-N}\,\nr \na f \nr_{N, \cC_x } \log\left( \frac{e\,\nr \na f \nr_{q, \cC_x } }{q'\, \nr \na f \nr_{N, \cC_x } }\right).
\end{equation}
\end{enumerate}
\end{lem}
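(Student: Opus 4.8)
The plan is to estimate the left-hand side $a^{N-1}\int_{\cC_x}\frac{|\na f(y)|}{|y-x|^{N-1}}\,d\mu_y$ by splitting the cone $\cC_x$ into the portion near the vertex, $\cC_{x,\rho}$ for a radius $\rho\in(0,a]$ to be chosen later, and the complementary annular piece $\cC_x\setminus\cC_{x,\rho}$. On the inner piece the singular kernel $|y-x|^{-(N-1)}$ is controlled by combining the bound with H\"older's inequality against the higher integrability exponent $q$; on the outer piece the kernel is bounded by $\rho^{-(N-1)}$ (in the $p<N$ case one instead pairs it with the $L^p$ norm to get the right homogeneity), so Fubini/polar coordinates turn the kernel contribution into an explicit power of $\rho$ (or a logarithm when $p=N$). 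After optimizing over $\rho$ one reads off the stated inequalities.

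More concretely, first I would pass to polar coordinates $y=x+s\om$, $\om\in\cS_\te$, $s\in(0,a)$, so that $d\mu_y=\frac{|\cS_\te|}{|\cC_x|}\,s^{N-1}\,ds\,\frac{dS_\om}{|\cS_\te|}$ and $|\cC_x|=\frac{|\cS_\te|}{N}a^N$; thus the left-hand side becomes $\frac{N}{a}\int_0^a\!\!\int_{\cS_\te}|\na f(x+s\om)|\,dS_\om\,ds$ up to constants. For the case $1\le p<N$ I would estimate $\int_{\cC_{x,\rho}}\frac{|\na f|}{|y-x|^{N-1}}\,dy$ by H\"older with exponent $q$ against $\|\,|y-x|^{-(N-1)}\|_{L^{q'}(\cC_{x,\rho})}$, which is finite precisely because $(N-1)q'<N$ for $q>N$, producing a factor $\rho^{N/q'-(N-1)}=\rho^{1-N/q}$ times $\|\na f\|_{q,\cC_x}$; and I would estimate $\int_{\cC_x\setminus\cC_{x,\rho}}\frac{|\na f|}{|y-x|^{N-1}}\,dy$ by H\"older with exponent $p$ against $\|\,|y-x|^{-(N-1)}\|_{L^{p'}(\cC_x\setminus\cC_{x,\rho})}$, which converges at infinity trivially (bounded cone) and at the lower cutoff gives a factor behaving like $\rho^{N/p'-(N-1)}=\rho^{1-N/p}$ times $\|\na f\|_{p,\cC_x}$. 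Writing $\Lambda_q=\|\na f\|_{q,\cC_x}$, $\Lambda_p=\|\na f\|_{p,\cC_x}$, the two terms scale like $\rho^{1-N/q}\Lambda_q$ and $\rho^{1-N/p}\Lambda_p$ (note $1-N/p<0$), and minimizing their sum over $\rho\in(0,a]$ at the balance point $\rho\sim(\Lambda_p/\Lambda_q)^{1/(N/p-N/q)}$ — checking that this $\rho$ is admissible, i.e. $\le a$, using $\Lambda_p\le\Lambda_q$ by Jensen for the normalized measure — yields $c\,\Lambda_q^{1-\al_{p,q}}\Lambda_p^{\al_{p,q}}$ with the exponent $\al_{p,q}$ coming out exactly as in \eqref{eq:def alpha pq}; the prefactor $a^{N-1}$ and the normalization constants combine so that no residual power of $a$ survives, as claimed.

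For the borderline case $p=N$ the outer estimate degenerates: $\|\,|y-x|^{-(N-1)}\|_{L^{N'}(\cC_x\setminus\cC_{x,\rho})}$ now has a logarithmic divergence as $\rho\to 0$, giving a factor like $(\log(a/\rho))^{1/N'}$ times $\|\na f\|_{N,\cC_x}$, while the inner term still contributes $\rho^{1-N/q}\|\na f\|_{q,\cC_x}$. Balancing these — choosing $\rho$ so that $\rho^{1-N/q}\Lambda_q\sim\Lambda_N$, i.e. $\rho\sim(\Lambda_N/\Lambda_q)^{q'/N}$ — and bookkeeping the constants carefully produces exactly the factor $\frac{q}{q-N}$ in front and the argument $\frac{e\,\Lambda_q}{q'\Lambda_N}$ inside the logarithm; here one must be slightly careful to absorb the $(\log)^{1/N'}$ versus $\log$ discrepancy, which is handled by instead doing the integral $\int_0^a s^{-(N-1)/N'}\cdots$ more directly and using the elementary bound $\log(1+t)\le\log(et)$, and by verifying $\rho\le a$ (again from $\Lambda_N\le\Lambda_q$).

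The main obstacle I expect is not any single estimate but getting the constants \emph{sharp and explicit} in the $p=N$ case: naively optimizing a sum of a power and a log gives an implicit equation and only $(\log)^{1/N'}$, so to land precisely on $\frac{q}{q-N}\,\Lambda_N\log\!\big(\tfrac{e\Lambda_q}{q'\Lambda_N}\big)$ one should avoid the crude splitting-and-optimizing and instead estimate $\int_{\cC_x}\frac{|\na f|}{|y-x|^{N-1}}$ in one stroke via H\"older against the decreasing rearrangement or, more simply, by a direct computation: bound $|\na f(x+s\om)|$ after integrating in $\om$, then handle $\int_0^a s^{-(N-1)/N}\,|g(s)|\,ds$ with the endpoint H\"older/Young inequality $\int_0^a \phi\,\psi\le \|\phi\|_{L^{N',\infty}}\|\psi\|_{L^{N,1}}$ trick or, equivalently, by splitting at the self-tuned radius $\rho=(\Lambda_N/\Lambda_q)^{q'/N}$ and estimating the two pieces by H\"older with exponents $(q,q')$ and $(N,N')$ respectively, where the second piece's kernel norm $\big(\int_\rho^a s^{-1}ds\big)^{1/N'}=(\log(a/\rho))^{1/N'}$ is then bounded using $a/\rho\le$ (something)$\cdot(\Lambda_q/\Lambda_N)^{q'/N}$ and the inequality $t^{1/N'}\le \frac{q}{q-N}\log(e\,t^{\,?})$ tuned to produce the exact stated constant. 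I would budget most of the writing effort here and keep the $p<N$ case, which is a clean two-piece interpolation, brief.
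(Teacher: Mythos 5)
Your proposal follows essentially the same route as the paper: split the cone at a radius $\sigma$, apply H\"older with exponents $q$ (inner piece near the vertex) and $p$ (outer piece), compute the kernel $L^{q'}$ and $L^{p'}$ norms explicitly in polar coordinates, and then optimize over $\sigma$. Two small corrections are worth noting. First, the $(\log)^{1/N'}$-versus-$\log$ discrepancy you agonize over in the borderline case $p=N$ is handled in the paper more simply than any of the alternatives you sketch: after normalizing constants one restricts to $\sigma<a/e$, so that $\log(a/\sigma)\ge 1$ and hence $(\log(a/\sigma))^{1-1/N}\le\log(a/\sigma)$ outright; the exact prefactor $\tfrac{q}{q-N}$ and the argument $e\nr\na f\nr_q/(q'\nr\na f\nr_N)$ of the logarithm then fall out of evaluating at the optimal $\ol\sigma/a=\bigl[q'\nr\na f\nr_{N,\cC_x}/\nr\na f\nr_{q,\cC_x}\bigr]^{q/(q-N)}$ (your claimed exponent $q'/N$ is a computational slip: solving $(\sigma/a)^{1-N/q}\sim\Lambda_N/\Lambda_q$ gives the power $q/(q-N)$, not $q'/N$). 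Second, the admissibility check for the optimal $\sigma$ is not a one-liner from $\nr\na f\nr_p\le\nr\na f\nr_q$: the paper genuinely needs the dichotomy between an interior minimizer and a boundary one ($\sigma=a$ for $p<N$, $\sigma=a/e$ for $p=N$), and in the boundary case uses the resulting reverse inequality to fall back on the single inner-piece bound and still recover the stated form. Your Jensen remark gives the right intuition but does not by itself close this case.
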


\begin{proof}
For any $\si\in(0,a)$, we compute that
\begin{multline}
\label{eq:interpolation p-q}
\int_{\cC_x} \frac{ | \na f ( y )| }{| y - x |^{N-1}} \, dy = \int_{\cC_{x, \si}} \frac{ | \na f ( y )| }{| y - x |^{N-1}} \, dy + \int_{\cC_x \setminus \cC_{x, \si}} \frac{ | \na f ( y )| }{| y - x |^{N-1}} \, dy \le
\\
\left[\int_{\cC_{x,\si}} \frac{dy}{|y-x|^{q' (N-1)}}\right]^{1/q'} \left(\int_{\cC_{x,\si}} |\na f(y)|^q dy\right)^{1/q}+ \\
 \left[ \int_{\cC_x \setminus \cC_{x, \si} } \frac{dy}{| y - x |^{p'(N-1)}} \right]^{1/p'} \left(\int_{\cC_x\setminus\cC_{x,\si}}|\na f|^p dy\right)^{1/p},
\end{multline}
by H\"older's inequality.
Now,
a direct computation shows that
\begin{multline}
\label{eq:integral-bound-p-q}
\left[\int_{\cC_{x, \si} }\frac{dy}{| y - x |^{q' (N-1)}}\right]^{1/q'} = \left[\frac{q-1}{q-N}\,|\cS_\te|\right]^{1/q'}\si^{\frac{q-N}{q}},  \\
\left[\int_{\cC_x \setminus \cC_{x, \si} } \frac{dy}{| y - x |^{p' (N-1)}}\right]^{1/p'} \!\!\!=
\begin{cases}
\left[\frac{p-1}{N-p} |\cS_\te| \, \left( \si^{-\frac{N-p}{p-1}} - a^{-\frac{N-p}{p-1}}   \right)\right]^{1/p'}   &\text{if $1\le p<  N$}, \vspace{3pt}\\
\left[|\cS_\te| \, \log\frac{a}{\si} \right]^{1/N'}    &\text{if $p = N$}.
\end{cases}
\end{multline}
For $p=1$, this formula must be intended in the limit as $p\to 1$.

\medskip

(i) Let $1\le p<N$. By \eqref{eq:interpolation p-q}, \eqref{eq:integral-bound-p-q}, and some algebraic manipulations, we can infer that
\begin{multline}
\label{eq:intermedia-p-q}
a^{N-1}\int_{\cC_x} \frac{ | \na f ( y )| }{| y - x |^{N-1}}\, d\mu_y  \le  \left[\frac{N\,(q-1)}{q-N}\right]^{1-1/q}\,\nr\na f\nr_{q,\cC_x}\left(\frac{\si}{a}\right)^{1-N/q}+ \\
\left[\frac{N\,(p-1)}{N-p}\right]^{1-1/p}\,\nr\na f\nr_{p,\cC_x}\left(\frac{\si}{a}\right)^{1-N/p}
\end{multline}
for any $\si\in(0,a]$. The minimum of the right-hand side is attained either at
$$
\ol{\si} = a\left[\frac{N (p-1)}{N-p}\right]^{\frac{q (p-1)}{N(q-p)}}  \left[\frac{q-N}{N(q-1)}\right]^{\frac{p (q-1)}{N(q-p)}} \left(\frac{1-\al_{p,q}}{\al_{p,q}}\,\frac{\nr \na f \nr_{p, \cC_x }}{\nr\na f \nr_{q, \cC_x}}\right)^{\frac{p q}{N (q-p)}},
$$
or at $\si=a$.
In the former case, we plug $\ol{\si}$ into \eqref{eq:intermedia-p-q} and obtain \eqref{eq:interpol-p-q-p<N} with some computable constant $k'$.
In the latter case, we have that
$$
\left[\frac{N (p-1)}{N-p}\right]^{\frac{q (p-1)}{N(q-p)}}  \left[\frac{q-N}{N(q-1)}\right]^{\frac{p (q-1)}{N(q-p)}} \left(\frac{1-\al_{p,q}}{\al_{p,q}}\,\frac{\nr \na f \nr_{p, \cC_x }}{\nr\na f \nr_{q, \cC_x}}\right)^{\frac{p q}{N (q-p)}}>1,
$$
since $\ol{\si}>a$. Hence, by means of this inequality and the fact that we have that
$$
\int_{\cC_x} \frac{ | \na f ( y )| }{| y - x |^{N-1}} d\mu_y  \le \frac1{a^{N-1}}\left[\frac{N (q-1)}{q-N}\right]^{1-1/q}\,\nr\na f \nr_{q, \cC_x} \left(\frac{\si}{a}\right)^{1-N/q},
$$
thanks to \eqref{eq:interpolation p-q},
we again obtain \eqref{eq:interpol-p-q-p<N} for some possibly different computable constant $k''$. Thus, 
we conclude that \eqref{eq:interpol-p-q-p<N} holds true with $k_{N,p,q}=\max(k', k'')$.

\medskip

(ii) Let $p=N$. We proceed as in the case (i) by putting together \eqref{eq:interpolation p-q} and \eqref{eq:integral-bound-p-q}. After some calculation, we obtain:
\begin{multline*}
a^{N-1}\int_{\cC_x} \frac{ | \na f ( y )| }{| y - x |^{N-1}} \, d\mu_y  \le \\
\left[\frac{N (q-1)}{q-N}\right]^{1-1/q}\nr\na f\nr_{q,\cC_x} \left(\frac{\si}{a}\right)^{1-N/q}+\left(N\,\log\frac{a}{\si}\right)^{1-1/N} \nr\na f\nr_{N,\cC_x}.
\end{multline*}
If we assume that $0<\si < a / e$, being as $N (q-1)\ge q-N$, we can simplify this inequality to get that
\begin{equation}
\label{eq:pre-inerp-N}
\frac{a^{N-1}}{N} \int_{\cC_x} \frac{ | \na f ( y )| }{| y - x |^{N-1}} \, d\mu_y  \le 
\frac{q-1}{q-N}\,\nr\na f\nr_{q,\cC_x} \left(\frac{\si}{a}\right)^{1-N/q}+\ \nr\na f\nr_{N,\cC_x} \log\frac{a}{\si}.
\end{equation}
Thus, the minimum of the right-hand side is attained either at
$$
\si=\ol{\si}=a\,\left[\frac{q'\,\nr\na f\nr_{N,\cC_x}}{\nr\na f\nr_{q,\cC_x}}\right]^{\frac{q}{q-N}} \ \mbox{ or at } \ \si=a/e.
$$
\par
In the former case, we get that
$$
\frac{a^{N-1}}{N}\,\int_{\cC_x} \frac{ | \na f ( y )| }{| y - x |^{N-1}} \, d\mu_y  \le
\frac{q}{q-N} \,\nr \na f \nr_{ N , \cC_x } \log\left(\frac{e\, \nr \na f \nr_{q, \cC_x }}{ q'\,\nr \na f \nr_{N, \cC_x } }\right),
$$
that is \eqref{eq:interpol-p-N} holds true.
In the latter case, we have that
$$
e^{-1}\le \left[\frac{q'\,\nr\na f\nr_{N,\cC_x}}{\nr\na f\nr_{q,\cC_x}}\right]^{\frac{q}{q-N}},
$$
since $\ol{\si}\ge a/e$. Thus, we get that
\begin{multline*}
\frac{a^{N-1}}{N}\,\int_{\cC_x} \frac{ | \na f ( y )| }{| y - x |^{N-1}} \, d\mu_y \le
\frac{q-1}{q-N}\nr\na f\nr_{q,\cC_x} e^{N/q-1}\le \\
\frac{q}{q-N}\,\nr\na f\nr_{N,\cC_x} \le
\frac{q}{q-N}\,\nr \na f \nr_{N, \cC_x }\,\log\left(\frac{e\,\nr\na f\nr_{q,\cC_x} }{q' \nr \na f \nr_{N, \cC_x } }\right),
\end{multline*}
being as $\nr\na f\nr_{N,\cC_x}\le \nr\na f\nr_{q,\cC_x}$. 
\end{proof}
As done for Corollary \ref{cor:MorreySobolev-Omega}, we obtain following consequence.

\begin{cor}
\label{cor:bound-omega}
For any cone $\cC_x\subset\Om$ of height $a$ and opening width $\te$, it holds that
$$
\int_{\cC_x} \frac{ | \na f ( y )| }{| y - x |^{N-1}} \, d\mu_y \le k(N,p,q, \te)\,\frac{|\Om|}{a^{2N-1}}
\,\nr\na f\nr_{q,\Om}^{1-\al_{p,q}} \, \nr \na f \nr_{p, \Om }^{\al_{p,q}},
$$
for $1\le p<N$ and, if $p=N$,
\begin{multline*}
\int_{\cC_x} \frac{ | \na f ( y )| }{| y - x |^{N-1}} \, d\mu_y \le 
k(N,p,q,\te)\,\frac{|\Om|^{1/N}}{a^N}\,\nr \na f \nr_{N, \Om } \log\left(
\frac{|\Om|^{1/q-1/N}}{a^{N/q-1}}\frac{\nr \na f \nr_{q, \Om} }{\nr \na f \nr_{N, \Om} }\right),
\end{multline*}
for some constant $k(N,p,q,\te)$ only depending on $N, p, q$, and $\te$.
\end{cor}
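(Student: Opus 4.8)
The plan is to derive both inequalities from Lemma~\ref{lem:stima p-q gradiente} by the same passage from a cone to the whole domain that was used to get Corollary~\ref{cor:MorreySobolev-Omega} from Corollary~\ref{cor:MorreySobolev}. Fix a cone $\cC_x\subset\Om$ of height $a$ and opening width $\te$. Since $\cC_x\subset\Om$, monotonicity of Lebesgue's integral with respect to set inclusion, combined with the explicit value $|\cC_x|=\frac{|\cS_\te|}{N}\,a^N$, gives for every $1\le r\le\infty$
\[
\nr\na f\nr_{r,\cC_x}=\left(\frac{1}{|\cC_x|}\int_{\cC_x}|\na f|^r\,dy\right)^{1/r}\le\left(\frac{|\Om|}{|\cC_x|}\right)^{1/r}\nr\na f\nr_{r,\Om}=\left(\frac{N\,|\Om|}{|\cS_\te|\,a^N}\right)^{1/r}\nr\na f\nr_{r,\Om},
\]
where, as usual, the two norms are computed with respect to the respective normalized measures. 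Everything reduces to plugging this into the estimates of Lemma~\ref{lem:stima p-q gradiente}.

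For $1\le p<N$, I substitute the inequality above with $r=p$ and $r=q$ into \eqref{eq:interpol-p-q-p<N} and divide by $a^{N-1}$. This produces in front the factor $\left(\frac{N|\Om|}{|\cS_\te|a^N}\right)^{\frac{1-\al_{p,q}}{q}+\frac{\al_{p,q}}{p}}$. The one step that is not pure bookkeeping is the observation that, by the definition \eqref{eq:def alpha pq} of $\al_{p,q}$, this exponent is exactly $1/N$: indeed $1-\al_{p,q}=\frac{q(N-p)}{N(q-p)}$, whence
\[
\frac{1-\al_{p,q}}{q}+\frac{\al_{p,q}}{p}=\frac{N-p}{N(q-p)}+\frac{q-N}{N(q-p)}=\frac1N .
\]
Thus the factor becomes $\left(\frac{N}{|\cS_\te|}\right)^{1/N}|\Om|^{1/N}a^{-1}$ and one arrives at the asserted bound with $\frac{|\Om|^{1/N}}{a^{N}}$ in place of $\frac{|\Om|}{a^{2N-1}}$. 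To finish, I use $\cC_x\subset\Om$ once more, which forces $\frac{|\cS_\te|}{N}a^N\le|\Om|$, i.e.\ $a\le\left(\frac{N}{|\cS_\te|}\right)^{1/N}|\Om|^{1/N}$, hence $a^{N-1}\le c(N,\te)|\Om|^{(N-1)/N}$ and therefore $\frac{|\Om|^{1/N}}{a^{N}}=\frac{|\Om|^{1/N}a^{N-1}}{a^{2N-1}}\le c(N,\te)\frac{|\Om|}{a^{2N-1}}$, which gives the first inequality after relabelling the constant.

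For $p=N$ the prefactor is treated the same way, $\frac{1}{a^{N-1}}\nr\na f\nr_{N,\cC_x}\le\left(\frac{N}{|\cS_\te|}\right)^{1/N}\frac{|\Om|^{1/N}}{a^{N}}\nr\na f\nr_{N,\Om}$; but the argument of the logarithm in \eqref{eq:interpol-p-N} is a quotient of norms over $\cC_x$, and the inclusion inequality controls its numerator $\nr\na f\nr_{q,\cC_x}$ only from above, while a lower bound on the denominator $\nr\na f\nr_{N,\cC_x}$ would be needed, so a termwise substitution is not legitimate. The clean way around this is not to touch the finished inequality \eqref{eq:interpol-p-N} but to go one step back, to the intermediate estimate \eqref{eq:pre-inerp-N} in the proof of Lemma~\ref{lem:stima p-q gradiente}(ii): there one first replaces $\nr\na f\nr_{q,\cC_x}$ and $\nr\na f\nr_{N,\cC_x}$ by $\left(\frac{N|\Om|}{|\cS_\te|a^N}\right)^{1/q}\nr\na f\nr_{q,\Om}$ and $\left(\frac{N|\Om|}{|\cS_\te|a^N}\right)^{1/N}\nr\na f\nr_{N,\Om}$, and only then minimises over $\si\in(0,a/e]$ as in that proof. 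Since the minimiser and the minimal value depend only on the ratio of the two coefficients, and this ratio now carries, besides the $\Om$-norms, precisely the powers $\frac{|\Om|^{1/q-1/N}}{a^{N/q-1}}$, one recovers \eqref{eq:interpol-p-N} with the cone norms replaced by the $\Om$-norms and an extra factor $\left(\frac{N}{|\cS_\te|}\right)^{1/N}\frac{|\Om|^{1/N}}{a^{N}}$ out front, i.e.\ the second inequality, up to a fixed multiplicative constant inside the logarithm that is absorbed in the usual way.

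The only genuinely delicate point I anticipate is this $p=N$ logarithmic case: one must resist substituting the one-sided norm estimates into \eqref{eq:interpol-p-N} and instead re-run the minimisation in $\si$ after the substitution. Everything else is mechanical, the single non-routine ingredient being the identity $\frac{1-\al_{p,q}}{q}+\frac{\al_{p,q}}{p}=\frac1N$, which collapses the powers of $|\Om|$ and $a$ in the case $1\le p<N$.
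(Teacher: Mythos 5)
Your proof is correct and essentially mirrors the paper's: both use the set-inclusion bound $\nr \na f\nr_{r,\cC_x}\le(|\Om|/|\cC_x|)^{1/r}\nr\na f\nr_{r,\Om}$ inside Lemma~\ref{lem:stima p-q gradiente}, and for $p=N$ both return to the intermediate estimate \eqref{eq:pre-inerp-N} and re-run the $\si$-minimisation, which you rightly flag as the one delicate step since the $N$-norm over the cone sits in the denominator inside the logarithm of \eqref{eq:interpol-p-N} and a one-sided inclusion estimate cannot be inserted there. A small bonus of your computation in the case $1\le p<N$ is that the identity $\frac{1-\al_{p,q}}{q}+\frac{\al_{p,q}}{p}=\frac1N$ actually yields the sharper prefactor $|\Om|^{1/N}/a^{N}$, from which the stated $|\Om|/a^{2N-1}$ follows via $|\cC_x|\le|\Om|$.
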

\begin{proof}
The monotonicity of Lebesgue's measure with respect to set inclusion and \eqref{eq:pre-inerp-N} easily give:
\begin{multline*}
\frac{a^{N-1}}{N} \int_{\cC_x} \frac{ | \na f ( y )| }{| y - x |^{N-1}} \, d\mu_y  \le \\
\frac{q-1}{q-N}\left(\frac{|\Om|}{|\cC_x|}\right)^{1/q}\nr\na f\nr_{q,\Om} \left(\frac{\si}{a}\right)^{1-N/q}+\left(\frac{|\Om|}{|\cC_x|}\right)^{1/N}\ \nr\na f\nr_{N,\Om} \log\frac{a}{\si}.
\end{multline*}
Thus, we can proceed as in the last part of the proof of Lemma \ref{lem:stima p-q gradiente}, with similar algebraic manipulations.
\end{proof}
\par
In light of Corollary  \ref{cor:bound-omega}, we can somewhat extend the bound \eqref{eq:p>N theo stima osc grad} to
the case $1\le p\le N$, provided $f\in W^{1,q}(\Om)$ for $q>N$. The proof is straightforward and runs as that of Theorem \ref{thm:case p > N}.

\begin{thm}
\label{thm:p<N and p=N}
Let $1\le p\le N $, $N<q\le\infty$, and set $\al_{p,q}$ as in \eqref{eq:def alpha pq}. 
Let $\Om \subset \RR^N$ be a bounded domain satisfying the $(\te, a)$-uniform interior cone condition. 
\par
For any $f\in W^{1,q}(\Om)$, it holds that
\begin{equation*}
	\max\limits_{\ol{ \Om}} f - \min\limits_{ \ol{ \Om}} f \le 
	k(N, p, q,\te)\,  \frac{|\Om|^{1/p}}{a^{N/p-1}} \,
	\nr\na f\nr_{p,\Om}^{\al_{p,q}}\nr\na f\nr_{q,\Om}^{1-\al_{p,q}},
\end{equation*}
if $1 \le p<N$ and, if $p=N$,
\begin{equation*}
	\max\limits_{\ol{ \Om}} f - \min\limits_{ \ol{ \Om}} f \le 
	k(N,p,q,\te)\,\frac{|\Om|^{1/N}}{a^N}\,\nr \na f \nr_{N, \Om } \log\left(
\frac{|\Om|^{1/q-1/N}}{a^{N/q-1}}\frac{\nr \na f \nr_{q, \Om} }{\nr \na f \nr_{N, \Om} }\right).
\end{equation*}	
Here,  $k(N, p, q,\te)$ is some constant only depending on $N, p, q$, $\te$.
\end{thm}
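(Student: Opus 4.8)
The plan is to mimic exactly the proof of Theorem \ref{thm:case p > N}, replacing the role of Corollary \ref{cor:MorreySobolev-Omega} by Corollary \ref{cor:bound-omega}. First, I would observe that the hypothesis $f\in W^{1,q}(\Om)$ with $q>N$ guarantees (via the Sobolev--Morrey embedding, or via Corollary \ref{cor:MorreySobolev-Omega} itself) that $f$ has a representative in $C(\ol\Om)$, so the oscillation $\max_{\ol\Om}f-\min_{\ol\Om}f$ is well defined; by a standard density argument it suffices to treat $f\in C^1(\ol\Om)$. Then I would pick $x_m, x_M\in\ol\Om$ attaining the minimum and maximum of $f$, and write
\begin{equation*}
\max_{\ol\Om}f-\min_{\ol\Om}f \le \bigl(f(x_M)-f_\Om\bigr)+\bigl(f_\Om-f(x_m)\bigr).
\end{equation*}

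Next, for a generic $x\in\{x_m,x_M\}$ I would use the uniform interior $(\te,a)$-cone condition to fix a cone $\cC_x\subset\Om$ with vertex $x$, apply Lemma \ref{lem:primo passo} (the pointwise bound \eqref{eq:lemma1-2}) to the function $f-f_\Om$ on $\cC_x$, which has the same gradient as $f$, obtaining
\begin{equation*}
\left|f(x)-f_\Om\right| = \left|(f-f_\Om)(x)-(f-f_\Om)_{\cC_x}+ (f-f_\Om)_{\cC_x}\right|,
\end{equation*}
and control the displayed difference $|(f-f_\Om)(x)-(f-f_\Om)_{\cC_x}|$ by $\frac{a^N}{N}\int_{\cC_x}\frac{|\na f(y)|}{|y-x|^{N-1}}\,d\mu_y$ via \eqref{eq:lemma1-2}. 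The remaining term $|(f-f_\Om)_{\cC_x}|=|f_{\cC_x}-f_\Om|$ I would absorb by noticing that it is itself bounded by $\int_{\cC_x}|f(y)-f_\Om|\,d\mu_y$, but more cleanly: since the left-hand side only involves two fixed points, I can instead directly estimate $|f(x)-f_{\cC_x}|$ and then handle $|f_{\cC_x}-f_\Om|$ the same way one does in Corollary \ref{cor:MorreySobolev-Omega}, using the inclusion $\cC_x\subset\Om$ and monotonicity of the integral. The cleanest route is exactly the one used for Corollary \ref{cor:MorreySobolev-Omega}: apply \eqref{eq:lemma1-2} to $f-f_\Om+f_{\cC_x}$ (whose average over $\cC_x$ equals $f_\Om$, and whose gradient is $\na f$), yielding $|f(x)-f_\Om|\le \frac{a^N}{N}\int_{\cC_x}\frac{|\na f(y)|}{|y-x|^{N-1}}\,d\mu_y$.

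Now invoke Corollary \ref{cor:bound-omega}: in the case $1\le p<N$ it gives
\begin{equation*}
\int_{\cC_x}\frac{|\na f(y)|}{|y-x|^{N-1}}\,d\mu_y \le k(N,p,q,\te)\,\frac{|\Om|}{a^{2N-1}}\,\nr\na f\nr_{q,\Om}^{1-\al_{p,q}}\nr\na f\nr_{p,\Om}^{\al_{p,q}},
\end{equation*}
so that $|f(x)-f_\Om|\le \frac{a^N}{N}\cdot k(N,p,q,\te)\,\frac{|\Om|}{a^{2N-1}}\,(\cdots)= k'(N,p,q,\te)\,\frac{|\Om|}{a^{N-1}}\,\nr\na f\nr_{q,\Om}^{1-\al_{p,q}}\nr\na f\nr_{p,\Om}^{\al_{p,q}}$; wait, the exponent of $|\Om|$ needs reconciling with the claimed $|\Om|^{1/p}a^{1-N/p}$, so one should instead apply the cone-scale version of Lemma \ref{lem:stima p-q gradiente}, i.e. use part (i) with the $\cC_x$-norms and then the passage $\nr\cdot\nr_{p,\cC_x}\le (|\Om|/|\cC_x|)^{1/p}\nr\cdot\nr_{p,\Om}$, which produces precisely the factor $|\Om|^{1/p}a^{1-N/p}$ after recalling $|\cC_x|=|\cS_\te|\,a^N/N$ and $\al_{p,q}/p+(1-\al_{p,q})/q=1/N$; this is the only arithmetic that needs care. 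In the threshold case $p=N$, I would likewise combine \eqref{eq:lemma1-2} with the second inequality of Corollary \ref{cor:bound-omega}, getting directly the logarithmic bound. Finally, summing the two contributions $f(x_M)-f_\Om$ and $f_\Om-f(x_m)$ only doubles the constant, and absorbing that factor of $2$ into $k(N,p,q,\te)$ completes the proof. The only genuinely non-routine point is verifying that the various powers of $a$ and $|\Om|$ coming out of Lemma \ref{lem:stima p-q gradiente}, the factor $a^N/N$ from Lemma \ref{lem:primo passo}, the volume ratio $|\Om|/|\cC_x|$, and the identity for $\al_{p,q}$ assemble exactly into $|\Om|^{1/p}a^{1-N/p}$ (resp.\ $|\Om|^{1/N}a^{-N}$ with the stated logarithmic argument); everything else is a verbatim repetition of the argument for Theorem \ref{thm:case p > N}, as the statement itself already anticipates.
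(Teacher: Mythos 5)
Your approach is exactly the one the paper intends: the paper's own ``proof'' of Theorem~\ref{thm:p<N and p=N} consists only of the remark that one runs the argument of Theorem~\ref{thm:case p > N} with Corollary~\ref{cor:bound-omega} in place of Corollary~\ref{cor:MorreySobolev-Omega}, which is what you do. You are also right to flag the bookkeeping issue: the constant as printed in Corollary~\ref{cor:bound-omega} (the factor $|\Om|/a^{2N-1}$ in the $p<N$ case), once multiplied by the $a^N/N$ from Lemma~\ref{lem:primo passo}, gives $|\Om|/a^{N-1}$, which does \emph{not} literally reduce to the $|\Om|^{1/p}a^{1-N/p}$ in the theorem's statement. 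Going back to Lemma~\ref{lem:stima p-q gradiente}(i) and performing the cone-to-domain passage is indeed the way to fix this.

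The one slip is in the final arithmetic claim. If you convert each norm with its own exponent, $\nr\cdot\nr_{s,\cC_x}\le(|\Om|/|\cC_x|)^{1/s}\nr\cdot\nr_{s,\Om}$ for $s=p$ and $s=q$, then the identity $\al_{p,q}/p+(1-\al_{p,q})/q=1/N$ produces the factor $(|\Om|/|\cC_x|)^{1/N}=c(N,\te)\,|\Om|^{1/N}/a$, and after multiplying by $a^N/N$ and dividing by $a^{N-1}$ you obtain the prefactor $c\,|\Om|^{1/N}$, \emph{not} $|\Om|^{1/p}a^{1-N/p}$ as you assert. This is a sharper bound than the one stated in the theorem: since $|\Om|\ge|\cC_x|=|\cS_\te|a^N/N$, one has $|\Om|^{1/N}\le c(N,p,\te)\,|\Om|^{1/p}a^{1-N/p}$, so the stated inequality does follow. (Alternatively, one may crudely bound both cone norms by $(|\Om|/|\cC_x|)^{1/p}\nr\cdot\nr_{\cdot,\Om}$, using $|\Om|/|\cC_x|\ge1$ and $1/q<1/p$; this produces precisely the printed prefactor $|\Om|^{1/p}a^{1-N/p}$.) Either way your proof is valid; just be aware that the identity you invoke yields the exponent $1/N$, not $1/p$, and the theorem's weaker constant is then recovered via $|\Om|\gtrsim a^N$.
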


\section{Application to quantitative symmetry \\  for the Soap Bubble Theorem}
\label{sec:SBT}

As already mentioned, Theorems \ref{thm:case p > N} and \ref{thm:p<N and p=N} give an alternative way to obtain, and even up-grade, the bounds in \cite[Theorems 2.10 and 2.8]{MP3}.
As a by-product, we also obtain new up-graded versions
of stability estimates for the Soap Bubble Theorem
and Serrin's symmetry result.
In this and the next section, we shall give some details on how to obtain the new versions of those stability results. Of course, a similar reasoning can be applied to other stability results contained
in \cite{MP1,MP3,PogTesi,DPV}.

\subsection{Preliminary notations and useful bounds}

For a point $z\in\Om$, $\rho_i$ and $\rho_e$ shall denote 
the radius of the largest ball contained in $\Om$
and that of the smallest ball that contains $\Om$, both centered at $z$; in formulas, 
\begin{equation}
\label{def-rhos}
\rho_i=\min_{x\in\Ga}|x-z|  \ \mbox{ and } \ \rho_e=\max_{x\in\Ga}|x-z|.
\end{equation}
\par
We say that $\Om$ satisfies a \textit{uniform interior sphere condition} (with radius $r$) if
for every $p \in \Ga$ there exists a ball $B_r\subset\Om$ such that $\pa B_r\cap\Ga=\{ p\}$; $\Om$ satisfies a \textit{uniform exterior sphere condition} if $\RR^N \setminus \ol{\Om}$ satisfies a uniform interior sphere condition. From now on, we will consider a bounded domain $\Om$ with boundary $\Ga$ of  class $C^{2}$, so that $\Om$ satisfies both a uniform interior and exterior sphere condition. We shall denote by $r_i$ and $r_e$ the relevant respective radii.
It is trivial to check that when $\Om$ satisfies the interior condition with radius $r_i$, then it satisfies the uniform interior $(\te, a)$-cone condition with
\begin{equation}\label{eq:sferainterna allora conointerno}
\te= \frac{ \sqrt{2} }{ 2 } ,  \quad a = r_i.
\end{equation}

\medskip

Next, we consider the solution $u\in C^0(\ol{\Om})\cap C^2(\Om)$ of
\begin{equation}
\label{serrin1}
\De u=N \ \mbox{ in } \ \Om, \quad u=0 \ \mbox{ on } \ \Ga.
\end{equation}
It is well-known that $u\in C^{m,\ga}(\ol{\Om})$ if $\Ga$ is of class $C^{m,\ga}$, $0<\ga\le 1$, for $m=1, 2,\cdots$.

By $M$ we denote a uniform upper bound for the gradient of $u$ on $\ol{\Om}$, in formulas, 
\begin{equation*}
	\label{bound-gradient}
	M \ge \max_{\ol{\Om}} |\na u|=\max_{\Ga} u_\nu.
\end{equation*}
As shown in \cite[Theorem 3.10]{MP1}, 
we can choose an explicit value for $M$:
\begin{equation}
	\label{bound-M}
	M = (N+1)\,\frac{d_\Om(d_\Om+r_e)}{2 r_e}.
\end{equation}

By following \cite{MP3,PogTesi}, we consider the harmonic function
$$
h=Q^z-u,
$$
where $Q^z$ is defined in \eqref{quadratic}.
Notice that, if $z \in \Om$, it holds that
\begin{equation}\label{oscillation}
\max_{\Ga} h-\min_{\Ga} h=\frac12\,(\rho_e^2-\rho_i^2)\ge \left(\frac{|\Om|}{|B|}\right)^{1/N} \frac{\rho_e-\rho_i}{2}\ge \frac{r_i}{2} \, (\rho_e-\rho_i).
\end{equation}
The left-hand side of this inequality can be estimated by Theorems \ref{thm:case p > N} and \ref{thm:p<N and p=N}.
\par
As in \cite{MP3}, it will be convenient to choose $z\in\Om$ as a global minimum point of $u$.
We know from \cite{MP5} that, in this case, the distance $\de_\Ga(z)$ of $z$ to $\Ga$ can be estimated from below in terms of the \textit{inradius} $r_\Om$ (the radius of a maximal ball contained in $\Om$). In fact, in light of \cite[Theorem 1.1]{MP5}, it holds that
\begin{equation}
\label{bound-distance-1}
\de_\Ga(z)\ge \frac{r_\Om}{\sqrt{N}},
\end{equation}
if $\Om$ is mean convex. If $\Ga$ is a general surface of class $C^2$, \cite[Corollary 2.7]{MP5} gives instead the slightly poorer bound:
\begin{equation}
\label{bound-distance-2}
\de_\Ga(z)\ge \frac{r_\Om}{\sqrt{N}} \left[1+\frac{N^2-1}{2N}\,\frac{d_\Om}{r_e}\left(1+\frac{d_\Om}{r_e}\right)\right]^{-1/2}.
\end{equation}

\begin{rem}[On the normalized norms]
\label{rem:normalizations}
{\rm
For the sake of consistency with the previous sections, we will continue to denote by $\nr \cdot \nr_{p , \Om}$ and $\nr \cdot \nr_{p , \Ga}$ the $L^p$-norms in the relevant normalized  measure. Since it holds that 
\begin{equation*}
\label{eq:volume monotonicity}
|B| \, r_\Om^N \le |\Om| \le |B|\,d_\Om^N \quad \mbox{ and } \quad 
N\,|B|\,r_\Om^{N-1}	\le | \Ga | \le N\, \frac{ |\Om| }{r_i},
\end{equation*}
such norms are equivalent to the standard ones. The first three inequalities follow from the inclusions $B_{r_\Om}\subset\Om\subset B_{d_\Om}$.
The last inequality is obtained by putting together the identity
$$
N|\Om|= \int_{\Ga} u_\nu \, dS_x
$$
with the inequality $u_\nu \ge r_i$, which holds true at any point in $\Ga$, by an adaptation of Hopf's lemma (see \cite[Theorem 3.10]{MP1}). 
\par
Notice that, since $r_\Om\ge r_i$, $r_\Om$ can be replaced by $r_i$ in all the relevant formulas.
}
\end{rem}

In what follows, we use the letter $c$ to denote a constant whose value may change line by line.  The dependence of $c$ on
the relevant parameters will be indicated whenever it is important.
All the constants $c$ can be explicitly computed (by following the steps in the relevant proofs) and estimated in terms of the indicated parameters only.	

\subsection{Bounds for $\rho_e-\rho_i$ in terms of $h$}
\label{subsec:bounds-osc-rho}
By applying Theorems \ref{thm:case p > N} and \ref{thm:p<N and p=N} to $h$, we easily obtain the starting point of our analysis.
\begin{lem}
\label{lem:intermediate h}
Let $\Om\subset\RR^N$, $N\ge 2$, be a bounded domain with boundary $\Ga$ of class $C^{2}$. Let
$z$ be a point in $\Om$, and
consider the function  $h=Q^z-u$, with $Q^z$ defined in \eqref{quadratic}. 
\par
There exists a constant $c=c(N, p, r_i)$ such that
\begin{equation*}
\label{eq:stima-grad}
\rho_e - \rho_i  \le 
c\,\begin{cases}
\nr \na h \nr_{p, \Om}  \ &\mbox{if $p>N$}; \\
\displaystyle \nr \na h \nr_{N, \Om} \log \left( \frac{ e \, \nr \na h \nr_{\infty, \Om}  }{ \nr \na h \nr_{N, \Om} } \right)   \ &\mbox{if $p=N$}; \\
\nr \na h \nr_{\infty, \Om}^{ (N-p)/N } \nr \na h \nr_{p, \Om}^{p/N}   \ &\mbox{if $1\le p<N$.}
\end{cases}
\end{equation*}
\end{lem}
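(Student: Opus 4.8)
The plan is to apply the oscillation estimates of Theorems~\ref{thm:case p > N} and \ref{thm:p<N and p=N} directly to the harmonic function $h=Q^z-u$, and then to convert the resulting bound on $\max_\Ga h-\min_\Ga h$ into a bound on $\rho_e-\rho_i$ via the elementary inequality \eqref{oscillation}. First I would observe that, since $\Ga$ is of class $C^2$, $\Om$ satisfies a uniform interior sphere condition with some radius $r_i>0$, and hence, by \eqref{eq:sferainterna allora conointerno}, it satisfies the uniform interior $(\te,a)$-cone condition with $\te=\sqrt2/2$ and $a=r_i$. Moreover $h=Q^z-u\in C^1(\ol\Om)$ (indeed $u\in C^1(\ol\Om)$ since $\Ga\in C^2$), so $h\in W^{1,q}(\Om)$ for every $q\le\infty$, and the theorems apply.

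Next I would feed $h$ into the two theorems. In the regime $p>N$, Theorem~\ref{thm:case p > N} gives
\[
\max_{\ol\Om}h-\min_{\ol\Om}h\le k(N,p,\te)\,a^{1-N/p}\,|\Om|^{1/p}\,\nr\na h\nr_{p,\Om}.
\]
In the regime $1\le p<N$, Theorem~\ref{thm:p<N and p=N} with the endpoint choice $q=\infty$ (so that $\al_{p,\infty}=p/N$ and $1-\al_{p,\infty}=(N-p)/N$) gives
\[
\max_{\ol\Om}h-\min_{\ol\Om}h\le k(N,p,\infty,\te)\,\frac{|\Om|^{1/p}}{a^{N/p-1}}\,\nr\na h\nr_{p,\Om}^{p/N}\,\nr\na h\nr_{\infty,\Om}^{(N-p)/N},
\]
and in the threshold case $p=N$, again taking $q=\infty$, the logarithmic bound
\[
\max_{\ol\Om}h-\min_{\ol\Om}h\le k(N,N,\infty,\te)\,\frac{|\Om|^{1/N}}{a^N}\,\nr\na h\nr_{N,\Om}\log\!\left(\frac{e\,\nr\na h\nr_{\infty,\Om}}{\nr\na h\nr_{N,\Om}}\right),
\]
where I have absorbed the factor $|\Om|^{1/q-1/N}a^{1-N/q}\to 1$ as $q\to\infty$ (alternatively one runs Corollary~\ref{cor:bound-omega} directly with the $L^\infty$ bound, which is cleaner). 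Here one should note that $\max_\Ga h-\min_\Ga h\le\max_{\ol\Om}h-\min_{\ol\Om}h$ trivially, so the left-hand sides control the boundary oscillation.

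Finally I would combine these with \eqref{oscillation}, which asserts $\tfrac{r_i}{2}(\rho_e-\rho_i)\le\max_\Ga h-\min_\Ga h$ whenever $z\in\Om$, and absorb the factors $a=r_i$, $|\Om|$, and the fixed $\te=\sqrt2/2$ into a single constant $c$. Using $r_\Om\ge r_i$ and the volume bounds $|\Om|\le|B|\,d_\Om^N$ recalled in Remark~\ref{rem:normalizations} (noting that $d_\Om$ is itself controlled in the ambient setup, or simply keeping the $|\Om|$-dependence inside $c$) one gets a constant depending only on $N$, $p$, and $r_i$, yielding the three displayed inequalities. I do not anticipate a genuine obstacle here: the only points requiring a little care are (a) checking that the endpoint $q=\infty$ is legitimate in Theorem~\ref{thm:p<N and p=N}, which is covered since the statement allows $q\le\infty$ and Corollary~\ref{cor:MorreySobolev} already handles the $p=\infty$ limit by continuity, and (b) the bookkeeping that turns the $|\Om|$- and $a$-dependent constants of Section~\ref{sec: interpolating estimates} into a clean $c(N,p,r_i)$ using the monotonicity of the $L^p$-norms and the inclusions $B_{r_i}\subset\Om$; both are routine.
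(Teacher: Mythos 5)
Your proposal is correct and follows essentially the same route as the paper's own (very terse) proof: apply Theorems~\ref{thm:case p > N} and \ref{thm:p<N and p=N} to $f=h$ with $q=\infty$, use \eqref{eq:sferainterna allora conointerno} to take $\te=\sqrt2/2$, $a=r_i$, and then convert the oscillation bound into a bound on $\rho_e-\rho_i$ via \eqref{oscillation}, absorbing the remaining factors into the constant. One small slip: as $q\to\infty$ the factor $|\Om|^{1/q-1/N}a^{1-N/q}$ inside the logarithm tends to $a/|\Om|^{1/N}$, not to $1$; this is harmless because $|\Om|\ge|B|\,r_i^N$ and $a=r_i$ force $a/|\Om|^{1/N}\le|B|^{-1/N}$, so it is still controlled by a constant depending only on $N$.
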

\begin{proof}
We apply Theorems \ref{thm:case p > N} and \ref{thm:p<N and p=N}, with $f=h$ and $q=\infty$. By taking into account \eqref{oscillation} and \eqref{eq:sferainterna allora conointerno}, the desired estimates 
easily follow.
(Notice that
\eqref{eq:sferainterna allora conointerno} informs us that in Theorems \ref{thm:case p > N} and \ref{thm:p<N and p=N} we can take $a=r_i$.)
\end{proof}

\begin{rem}[Weighted Poincar\'e inequality]
\label{rem:weighted-grad}
{\rm
Here, we recall a bound for the gradient of $h$, which we will need
in the sequel. Since $z$ is a critical point of $h$ (being as $\na h(z)=\na Q^z(z)-\na u(z)=0$), we know from \cite[Corollary 2.3]{MP3} that
$h$ satisfies the weighted Poincar\'e inequality
\begin{equation*}
	\nr \na h \nr_{r, \Om} \le c\, \nr\de_\Ga^\al\na^2 h  \nr_{p, \Om}.
\end{equation*}
Here, $r, p, \al$ are three numbers such that
\begin{equation*}
	1 \le p \le r \le \frac{N\,p}{N-p\,(1 - \al )} , \quad p\,(1 - \al)<N , \quad 0 \le \al \le 1.
\end{equation*}
The constant $c$ can be explicitly computed by putting together item (iii) of \cite[Remark 2.4]{MP3}, \eqref{bound-distance-2},  and the normalizations discussed in Remark \ref{rem:normalizations}. In detail, we can compute that
$$
c \le k_{N,r,p,\al} \,  |\Om|^{\frac{1-\al}{N}} (d_\Om/r_i)^N\left[N+(N^2-1) \frac{d_\Om}{2 r_e}\left(1+\frac{d_\Om}{r_e}\right)\right]^{N/2} ,
$$
for some constant $k_{N,r,p,\al}$ only depending on $N, r, p,\al$. When $\Ga$ is mean convex, the term in square brackets in the bound above can be removed, by using \eqref{bound-distance-1} in place of \eqref{bound-distance-2}.

}
\end{rem}

As described in the introduction, in order to obtain stability estimates for the Soap Bubble Theorem, we must associate the difference $\rho_e - \rho_i$ with the $L^2$-norm of the hessian matrix $\na^2 h$. The following result gives this association.

\begin{thm}
\label{thm:SBT-W22-stability}
Let $\Om\subset\RR^N$, $N\ge 2$, be a bounded domain with boundary $\Ga$ of class $C^{2}$. Let $z \in \Om$ be a global minimum point of $u$ in $\ol{\Om}$ and set $h=Q^z-u$.
Then, there exists a constant $c=c(N, r_i, r_e, d_\Om)$ such that
$$
\label{ineq:diff-radii-hessian}
\rho_e - \rho_i \le  
c\,\begin{cases}
\nr \na^2 h \nr_{2, \Om}  \ &\mbox{for $N=2, 3$}; \vspace{3pt} \\
\displaystyle \nr \na^2 h \nr_{2,\Om}  \max \left[ \log \left(  \frac{  e \, \nr \na h \nr_{\infty, \Om} }{  \nr \na^2 h \nr_{2,\Om} } \right) , 1\right], \ &\mbox{for $N=4$}; \vspace{3pt}\\
\nr \na h \nr_{\infty, \Om}^{ \frac{N-4}{N-2} }\,  \nr \na^2 h \nr_{2,\Om}^{ \frac{2}{N-2}} ,   \ &\mbox{for $N\ge 5$.}
\end{cases}
$$
\end{thm}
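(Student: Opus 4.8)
The strategy is to combine Lemma \ref{lem:intermediate h}, applied with the appropriate exponent $p$, with the weighted Poincar\'e inequality recorded in Remark \ref{rem:weighted-grad} (with $\al=1$), so as to trade the $L^p$-norm of $\na h$ for the $L^2$-norm of $\de_\Ga\,\na^2 h$, and finally bound $\de_\Ga$ by the diameter $d_\Om$ to land on $\nr\na^2 h\nr_{2,\Om}$. The dimensional split in the statement mirrors exactly the three cases of Lemma \ref{lem:intermediate h}: for $N=2,3$ we will use an exponent $p>N$, for $N=4$ the borderline exponent $p=N$, and for $N\ge5$ an exponent $p<N$. In each case the key is to choose $p$ so that the weighted Poincar\'e inequality with $\al=1$ is available with that $r=p$: taking $\al=1$ the admissibility condition $1\le q\le Nq/(N-q(1-\al))$ becomes vacuous in the upper bound and reduces to $q<\infty$ for the source exponent, so we just need an exponent $q$ with $q\le p$ and $q<N$ (when $p\ge N$) for which $\nr\na h\nr_{p,\Om}\le c\,\nr\de_\Ga\na^2 h\nr_{q,\Om}$; then a further H\"older step on the normalized measure upgrades $\nr\de_\Ga\na^2 h\nr_{q,\Om}$ to $\nr\de_\Ga\na^2 h\nr_{2,\Om}\le d_\Om\,\nr\na^2 h\nr_{2,\Om}$ at the cost of a constant depending on $N,d_\Om,r_i$.

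\textbf{The three cases in detail.} For $N=2,3$, fix any $p$ with $N<p<\infty$ (e.g. $p=N+1$); Lemma \ref{lem:intermediate h} gives $\rho_e-\rho_i\le c\,\nr\na h\nr_{p,\Om}$, and the weighted Poincar\'e inequality with $r=p$, $\al=1$ and a suitable source exponent $q\le\min(p,2)$ (here $q=2$ is admissible when $p\ge2$, which holds) yields $\nr\na h\nr_{p,\Om}\le c\,\nr\de_\Ga\na^2 h\nr_{2,\Om}\le c\,d_\Om\,\nr\na^2 h\nr_{2,\Om}$; composing gives the claim with $c=c(N,r_i,r_e,d_\Om)$. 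For $N=4$, Lemma \ref{lem:intermediate h} gives
\[
\rho_e-\rho_i\le c\,\nr\na h\nr_{N,\Om}\,\log\!\left(\frac{e\,\nr\na h\nr_{\infty,\Om}}{\nr\na h\nr_{N,\Om}}\right);
\]
the weighted Poincar\'e inequality with $r=N$, $\al=1$, $q=2$ (admissible since $2\le N=4$) gives $\nr\na h\nr_{N,\Om}\le c\,\nr\na^2 h\nr_{2,\Om}$, and, because $t\mapsto t\log(A/t)$ is increasing for $0<t\le A/e$ while being bounded by $t$ outside that range, the logarithmic factor is controlled by $\max\big[\log(e\,\nr\na h\nr_{\infty,\Om}/\nr\na^2 h\nr_{2,\Om}),1\big]$ up to adjusting $c$; this is exactly the stated $N=4$ bound. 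For $N\ge5$, Lemma \ref{lem:intermediate h} gives $\rho_e-\rho_i\le c\,\nr\na h\nr_{\infty,\Om}^{(N-p)/N}\,\nr\na h\nr_{p,\Om}^{p/N}$; here we choose the distinguished exponent $p=\tfrac{2N}{N-2}$ (the Sobolev conjugate of $2$), which is admissible as the target exponent $r$ in Remark \ref{rem:weighted-grad} with $\al=1$ and source exponent $q=2$ precisely because then $p\le \tfrac{Np}{N-2\cdot0}$... more simply, with $\al=1$ the bound $\nr\na h\nr_{r,\Om}\le c\,\nr\de_\Ga\na^2 h\nr_{q,\Om}$ holds for any $r<\infty$ and $q\le r$, so $p=\tfrac{2N}{N-2}$, $q=2$ works; then $(N-p)/N$ becomes $(N-4)/(N-2)$ after one checks $\tfrac{N-p}{N}=\tfrac{N-4}{N-2}$ with this $p$... let me recompute: actually we want the exponent on $\nr\na^2 h\nr_2$ to be $\tfrac{2}{N-2}$, which forces $p/N\cdot(\text{weight of }\na^2 h\text{ from Poincar\'e})=\tfrac{2}{N-2}$; choosing $q=2$ and $r=p$ with $p$ such that $p/N=\tfrac{2}{N-2}$, i.e. $p=\tfrac{2N}{N-2}$, indeed $p<N$ for $N\ge5$ iff $\tfrac{2N}{N-2}<N$ iff $2<N-2$ iff $N>4$, so this is legitimate; then $\nr\na h\nr_{\infty}^{(N-p)/N}$ has exponent $(N-p)/N=1-\tfrac{2}{N-2}=\tfrac{N-4}{N-2}$, matching the statement, and $\nr\na h\nr_{p,\Om}^{p/N}\le c\,\nr\na^2 h\nr_{2,\Om}^{2/(N-2)}$ after bounding $\de_\Ga\le d_\Om$.

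\textbf{Expected main obstacle.} The routine parts are the H\"older and inclusion estimates for normalized norms; the only genuine care is needed in two spots. First, one must verify that the weighted Poincar\'e inequality of Remark \ref{rem:weighted-grad} is indeed applicable in each case with $\al=1$ and the chosen $(r,q)=(p,2)$ — in particular that $z$ being a global minimum point of $u$ guarantees $\na h(z)=0$, which is exactly the hypothesis under which that inequality was stated, and that the admissibility constraints $1\le q\le r\le Np/(N-p(1-\al))$, $p(1-\al)<N$ degenerate favorably when $\al=1$. Second, in the $N=4$ case one must handle the monotonicity/boundedness of the map $t\mapsto t\log(eA/t)$ carefully to produce the $\max[\,\cdot\,,1]$ form, including the possibility $\nr\na^2 h\nr_{2,\Om}>\nr\na h\nr_{\infty,\Om}$ (which cannot actually occur for the normalized norms since $\nr\na^2 h\nr_{2,\Om}$ controls... no — it can be subtle), where the $\log$ would be negative and the bound must fall back to a multiple of $\nr\na^2 h\nr_{2,\Om}$; this is precisely what the $\max[\cdot,1]$ absorbs. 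Once these two points are settled, the three estimates follow by direct substitution, and tracking constants through Lemma \ref{lem:intermediate h}, Remark \ref{rem:weighted-grad}, and the normalizations of Remark \ref{rem:normalizations} shows that $c$ depends only on $N$, $r_i$, $r_e$, and $d_\Om$.
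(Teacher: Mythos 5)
Your overall architecture (Lemma \ref{lem:intermediate h} with $p>N$, $p=N$, $p<N$ according to whether $N\le 3$, $N=4$, $N\ge 5$, followed by a Poincar\'e-type step, plus the monotonicity of $t\mapsto t\max\{\log(A/t),1\}$ for the logarithmic case) matches the paper, and your exponent choices $p=N$ for $N=4$ and $p=2N/(N-2)$ for $N\ge5$ are exactly the paper's. However, there is a genuine gap in the Poincar\'e step: you invoke Remark \ref{rem:weighted-grad} with $\al=1$ and claim that the admissibility condition ``becomes vacuous in the upper bound.'' It does the opposite. The constraint is $p\le r\le Np/(N-p(1-\al))$ (in the remark's notation, $p$ is the source exponent and $r$ the target), and for $\al=1$ the upper bound equals $Np/N=p$, forcing $r=p$. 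So with source exponent $2$ and $\al=1$ you only get $\nr\na h\nr_{2,\Om}\le c\,\nr\de_\Ga\na^2 h\nr_{2,\Om}$; the inequalities you actually need, e.g.\ $\nr\na h\nr_{2N/(N-2),\Om}\le c\,\nr\de_\Ga\na^2 h\nr_{2,\Om}$, are not covered. This is not a technicality: the weight $\de_\Ga^\al$ vanishes at the boundary, so increasing $\al$ weakens the right-hand side and correspondingly shrinks the admissible gain in integrability — the gain you are relying on is only available for small $\al$.

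The repair is simply to take $\al=0$, which is what the paper does: then the constraint reads $r\le Np/(N-p)$, and the choices $(r,p)=(6,2)$ for $N=3$ (and $(6,3/2)$ plus H\"older for $N=2$), $(4,2)$ for $N=4$, and $(2N/(N-2),2)$ for $N\ge5$ are all exactly admissible, landing directly on $\nr\na^2 h\nr_{2,\Om}$ with no detour through $\de_\Ga\le d_\Om$. (Your alternative choice $p=N+1$ for $N\le 3$ also works with $\al=0$, with the same caveat that for $N=2$ the source exponent must be taken below $2$ and then upgraded by H\"older.) The weighted versions with $\al=1/2$ are reserved in the paper for the Serrin estimates of Theorem \ref{thm:serrin-W22-stability}, where the weighted Hessian norm is the natural right-hand side; they are not needed, and not usable in the way you propose, here.
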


\begin{proof}
(i) Lemma \ref{lem:intermediate h}  with $p = 6$ gives that
\begin{equation*}\label{eq:dimSBTW22N23-step 1}
\rho_e - \rho_i  \le 
c
\, \nr \na h \nr_{6, \Om}\le c\, \nr \na^2 h  \nr_{2, \Om}.
\end{equation*}
The last inequality follows from Remark \ref{rem:weighted-grad} with $r=6$, $p=3/2$, and $\al= 0$, and H\"older's inequality, for $N=2$, and directly from Remark \ref{rem:weighted-grad} with $r=6$, $p=2$, and $\al= 0$, for $N=3$.
\par
(ii) Let $N=4$.
We use Lemma \ref{lem:intermediate h} with $p=N=4$ and get:
$$
\rho_e - \rho_i  \le 
c \,\max \left\lbrace \nr \na^2 h \nr_{4,\Om}  \log \left(  \frac{  e \, \nr \na h \nr_{\infty, \Om} }{  \nr \na^2 h \nr_{4,\Om} } \right) , \nr \na^2 h \nr_{4,\Om} \right\rbrace.   
$$
Next, Remark \ref{rem:weighted-grad} with $r=4$, $p=2$, $\al= 0$, gives:
$$
\nr \na h \nr_{4 , \Om} \le c  \, \nr \na^2 h \nr_{2,\Om}.
$$
Thus, the desired conclusion ensues by invoking the monotonicity of the function 
$t\mapsto t \max \{\log( A/t), 1\}$ for every $A>0$.

(iii) When $N\ge 5$, we can use Lemma \ref{lem:intermediate h} with $p=2N/(N-2)$ and  put it together with
Remark \ref{rem:weighted-grad} with $r=2N/(N-2)$, $p=2$, and $\al= 0$. 
\end{proof}
 
\begin{rem}
\label{rem:grad-h-hessian-h}
{\rm
For $N\ge 4$ the estimates of this theorem depend on $\nr \na h \nr_{\infty, \Om}$. Thus, as done in \cite{MP3}, since we know that
$$
\nr\na h\nr_{\infty,\Om}\le M+d_\Om,
$$
we can easily bound $\rho_e-\rho_i$ in terms of some constant (which possibly depends on $r_i, r_e$, and $d_\Om$, thanks to \eqref{bound-M}) and the number $\nr \na^2 h\nr_{2,\Om}$. Thanks to identity \eqref{H-fundamental}, this number is connected to the deviation $H-H_0$. This will lead to the asymptotic profile in the quantitative symmetry estimate for the Soap Bubble Theorem obtained in \cite{MP3}, with an improvement for the case $N=4$.
\par
However, notice that, when $\Om$ is near a ball in some good topology, the function $h$ tends to be a constant, and hence $\nr \na h \nr_{\infty, \Om}$ tends to be zero. Thus, we expect to improve the relevant bounds in Theorem \ref {thm:SBT-W22-stability}, once we can control
$\nr \na h \nr_{\infty, \Om}$ in terms of $\nr\na^2 h\nr_{2,\Om}$. This control will in turn benefit the quantitative symmetry estimate we are aiming to.  It turns out that an adaptation of our Theorem \ref{thm:p<N and p=N} gives
such desired bound for $\nr \na h \nr_{\infty, \Om}$, if an a priori bound for $\nr \na^2 h \nr_{q,\Om}$ for large $q$ is available, as the following corollary states.
}
\end{rem}

\begin{cor}
\label{cor:bound-for-gradient}
Let $\Om\subset\RR^N$ be a bounded domain with boundary of class $C^2$. Let $1\le p<N$,
$N<q\le\infty$, and set $\al_{p,q}$ as in \eqref{eq:def alpha pq}. Then, if $h\in W^{2,q}(\Om)$, it holds that
\begin{equation}\label{eq:new gradient bound for h}
\nr \na h \nr_{\infty, \Om} \le
c\,  \frac{|\Om|^{1/p}}{r_i^{N/p-1}} \,
	\nr\na^2 h\nr_{p,\Om}^{\al_{p,q}}\nr\na^2 h\nr_{q,\Om}^{1-\al_{p,q}}.
\end{equation}
Here, $c$ is a constant only depending on $N$, $p$, $q$.
\end{cor}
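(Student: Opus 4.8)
The plan is to deduce this from Theorem~\ref{thm:p<N and p=N} applied not to $h$ itself but to each partial derivative $\pa_i h$, $i=1,\dots,N$. Since $h\in W^{2,q}(\Om)$ and $q>N$, each $\pa_i h$ lies in $W^{1,q}(\Om)$, and its gradient $\na(\pa_i h)$ is the $i$-th row of the Hessian $\na^2 h$, so that $|\na(\pa_i h)|\le|\na^2 h|$ pointwise on $\Om$. Moreover, since $\Ga$ is of class $C^2$, $\Om$ satisfies a uniform interior sphere condition with radius $r_i$, hence the $(\te,a)$-uniform interior cone condition with $\te=\sqrt 2/2$ and $a=r_i$, by~\eqref{eq:sferainterna allora conointerno}. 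These remarks put us exactly in the hypotheses of Theorem~\ref{thm:p<N and p=N} (first branch, as $1\le p<N$).

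First I would invoke that theorem with $f=\pa_i h$, $a=r_i$, $\te=\sqrt2/2$, and bound the two gradient norms on its right-hand side by the corresponding Hessian norms, obtaining, for each $i$,
$$
\max_{\ol\Om}\pa_i h-\min_{\ol\Om}\pa_i h\le k\bigl(N,p,q,\tfrac{\sqrt2}{2}\bigr)\,\frac{|\Om|^{1/p}}{r_i^{N/p-1}}\,\nr\na^2 h\nr_{p,\Om}^{\al_{p,q}}\,\nr\na^2 h\nr_{q,\Om}^{1-\al_{p,q}}.
$$
Next I would pass from the oscillation of $\pa_i h$ to its sup-norm, using that $\na h$ vanishes at a point $z\in\Om$ (in the applications of interest, $h=Q^z-u$ with $z$ a critical point of $u$, so that $\na h(z)=\na Q^z(z)-\na u(z)=0$; cf.\ Remark~\ref{rem:weighted-grad}). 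Indeed, then $\pa_i h(z)=0$ lies between $\min_{\ol\Om}\pa_i h$ and $\max_{\ol\Om}\pa_i h$, whence $|\pa_i h(x)|\le\max_{\ol\Om}\pa_i h-\min_{\ol\Om}\pa_i h$ for every $x\in\ol\Om$ and every $i$. Combining this with the displayed estimate and the elementary bound $|\na h(x)|\le\sqrt N\,\max_i|\pa_i h(x)|$, and then taking the supremum over $x\in\ol\Om$, yields~\eqref{eq:new gradient bound for h} with $c=\sqrt N\,k(N,p,q,\sqrt2/2)$, a constant depending only on $N$, $p$, $q$.

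I do not expect a substantial obstacle here: the statement is a direct corollary of Theorem~\ref{thm:p<N and p=N}, and the only two points requiring attention are (i) that the oscillation estimate has to be applied to the components of $\na h$, whose gradients assemble into the Hessian, so that the scalar theorem suffices; and (ii) that converting an oscillation bound for $\pa_i h$ into a genuine bound for $\nr\na h\nr_{\infty,\Om}$ relies on the presence of an interior critical point of $h$. The remaining work—fixing $\te=\sqrt2/2$ and $a=r_i$, absorbing the factor $\sqrt N$, and checking that the final constant depends only on $N,p,q$—is routine bookkeeping.
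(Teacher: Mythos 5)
Your proof is correct and essentially identical to the paper's: both apply Theorem~\ref{thm:p<N and p=N} to first derivatives of $h$, use $\na h(z)=0$ (from the choice of $z$ as critical point of $u$) to turn the oscillation bound into a pointwise bound, and dominate the gradient of a derivative by $|\na^2 h|$. The only cosmetic difference is that the paper applies the theorem to the directional derivative $h_\ell$ for an arbitrary unit vector $\ell$ and takes the supremum over $\ell$, thereby avoiding the harmless extra factor $\sqrt N$ that your coordinate-by-coordinate argument incurs.
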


\begin{proof}
Since $\Ga$ is of class $C^2$ , $\Om$ has the uniform interior cone property with $\te=\sqrt{2}/2$ and $a=r_i$.  Let $x\in\ol{\Om}$
%
%
and let $\ell$ be any unit vector.
Applying Theorem \ref{thm:p<N and p=N} and using that, with our choice of $z$, $|h_\ell(x)|=|h_\ell(x)-h_\ell(z)|$, we have that
\begin{multline*}
|h_\ell(x)|\le 
	k(N, p, q)\,  \frac{|\Om|^{1/p}}{r_i^{N/p-1}} \,
	\nr\na h_\ell\nr_{p,\Om}^{\al_{p,q}}\nr\na h_\ell\nr_{q,\Om}^{1-\al_{p,q}}\le \\
k(N, p, q)\,  \frac{|\Om|^{1/p}}{r_i^{N/p-1}} \,
	\nr\na^2 h\nr_{p,\Om}^{\al_{p,q}}\nr\na^2 h\nr_{q,\Om}^{1-\al_{p,q}},
\end{multline*}
where we used the pointwise inequality
$|\na h_\ell|\le|\na^2 h|$. Hence, taking the supremum over all directions $\ell$ yields the desired conclusion.
\par
An inspection of the proof tells us that the corollary could be stated for a domain satisfying an interior cone condition.
 \end{proof}

This corollary allows us to upgrade Theorem \ref{thm:SBT-W22-stability} for $N\ge 5$. Notice that, for $N=4$, we would not get any subtantial improvement, due to the presence of the logarithm in the relevant claim of that theorem.

\begin{cor}
\label{cor:SBT-W22-stability}
Let $\Om\subset\RR^N$, $N\ge 5$, be a bounded domain with boundary $\Ga$ of class $C^2$. Let $z \in \Om$ be a global minimum point of $u$ in $\ol{\Om}$, set $h=Q^z-u$, and suppose that $h\in W^{2,q}(\Om)$.
Then, for every $q\in(N,\infty]$, there exists a constant $c=c(N, q, r_i, r_e, d_\Om)$ such that
$$
\rho_e - \rho_i \le  
c\,\nr \na^2 h \nr_{q, \Om}^{ \frac{q (N-4)}{ (q-2) N} }  \nr \na^2 h \nr_{2,\Om}^{\frac{4}{N}-\frac{2(N-4)}{N (q-2)} }.
$$
\end{cor}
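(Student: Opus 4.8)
The plan is to combine Theorem~\ref{thm:SBT-W22-stability} (the case $N\ge 5$) with Corollary~\ref{cor:bound-for-gradient}, which provides exactly the control of $\nr\na h\nr_{\infty,\Om}$ in terms of $\nr\na^2 h\nr_{2,\Om}$ and $\nr\na^2 h\nr_{q,\Om}$ that was announced in Remark~\ref{rem:grad-h-hessian-h}. Concretely, I would start from the bound
\begin{equation*}
\rho_e-\rho_i\le c\,\nr\na h\nr_{\infty,\Om}^{\frac{N-4}{N-2}}\,\nr\na^2 h\nr_{2,\Om}^{\frac{2}{N-2}}
\end{equation*}
given by Theorem~\ref{thm:SBT-W22-stability}(iii), and then substitute for $\nr\na h\nr_{\infty,\Om}$ using \eqref{eq:new gradient bound for h} with the choice $p=2$. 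With $p=2$ one has $\al_{2,q}=\frac{2(q-N)}{N(q-2)}$ and $1-\al_{2,q}=\frac{q(N-2)}{N(q-2)}$, so that Corollary~\ref{cor:bound-for-gradient} reads
\begin{equation*}
\nr\na h\nr_{\infty,\Om}\le c\,\frac{|\Om|^{1/2}}{r_i^{N/2-1}}\,\nr\na^2 h\nr_{2,\Om}^{\al_{2,q}}\,\nr\na^2 h\nr_{q,\Om}^{1-\al_{2,q}}.
\end{equation*}

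The second step is purely bookkeeping with exponents. Raising this to the power $\frac{N-4}{N-2}$ and multiplying by $\nr\na^2 h\nr_{2,\Om}^{2/(N-2)}$, the exponent of $\nr\na^2 h\nr_{q,\Om}$ becomes
\begin{equation*}
\frac{N-4}{N-2}\,(1-\al_{2,q})=\frac{N-4}{N-2}\cdot\frac{q(N-2)}{N(q-2)}=\frac{q(N-4)}{N(q-2)},
\end{equation*}
which matches the claimed exponent, while the exponent of $\nr\na^2 h\nr_{2,\Om}$ becomes
\begin{equation*}
\frac{N-4}{N-2}\,\al_{2,q}+\frac{2}{N-2}=\frac{N-4}{N-2}\cdot\frac{2(q-N)}{N(q-2)}+\frac{2}{N-2},
\end{equation*}
and a short simplification gives $\frac{4}{N}-\frac{2(N-4)}{N(q-2)}$, again as claimed. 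The prefactor $\bigl(|\Om|^{1/2}r_i^{1-N/2}\bigr)^{\frac{N-4}{N-2}}$ is absorbed into $c$ after estimating $|\Om|\le|B|\,d_\Om^N$ (Remark~\ref{rem:normalizations}); together with the dependence already present in the constant of Theorem~\ref{thm:SBT-W22-stability}, this yields a constant $c=c(N,q,r_i,r_e,d_\Om)$, as stated.

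There is essentially no obstacle here: the corollary is a direct composition of two previously established estimates, and the only thing requiring care is that the algebra of the exponents closes correctly — in particular that the two exponents on the right-hand side sum to the ``interpolation weight'' consistent with homogeneity (replacing $h$ by $\la h$ multiplies both sides by $\la$). I would verify this homogeneity check first, since it catches any slip in the exponent arithmetic, and only then write out the substitution. One should also note in passing that $q>N\ge 5$ guarantees $\al_{2,q}\in(0,1)$ and all exponents are non-negative, so every norm appearing is raised to an admissible power; and, as in Corollary~\ref{cor:bound-for-gradient}, the hypothesis $h\in W^{2,q}(\Om)$ is what makes $\nr\na^2 h\nr_{q,\Om}$ finite and the whole chain meaningful.
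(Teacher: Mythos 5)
Your proposal is correct and coincides with the paper's proof, which is stated as a one-line combination of Theorem~\ref{thm:SBT-W22-stability} and Corollary~\ref{cor:bound-for-gradient} with $p=2$; you have simply written out the exponent arithmetic, which checks out (and the two exponents do sum to $1$, as your homogeneity check requires).
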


\begin{proof}
Our claim simply follows by combining Theorem \ref{thm:SBT-W22-stability} and Corollary \ref{cor:bound-for-gradient} with the choice $p=2$.
\end{proof}

\subsection{Quantitative symmetry results}

We are now in position to obtain our new quantitative estimates of radial symmetry per the Soap Bubble Theorem. As already mentioned, all we have to do is to relate the norm $\nr \na^2 h \nr_{2,\Om}$ to the deviation of $H$ from $H_0$ in some norm. 
\par
The quantities $\nr \na h \nr_{\infty,\Om}$ and  $\nr \na^2 h \nr_{q,\Om}$ in Theorem \ref{thm:SBT-W22-stability} and Corollary \ref{cor:SBT-W22-stability} will contribute to the computation of the constant in the desired stability profile, as explained in the next remark.

\begin{rem}
\label{rem:schauder-calderon-zygmund}
{\rm
We shall consider two regularity assumptions on $\Ga$.
\par
(i) When $\Ga$ is of class $C^2$, we have that $u\in W^{2,q}(\Om)$ for any $q\in [1,\infty)$ and an a priori bound for $\nr \na^2 h \nr_{q,\Om}$ can be obtained, by the standard $L^q$ estimates for elliptic equations,  being as $\na^2 h = I - \na^2 u$. In fact, by putting together \cite[Theorems 914 and 9.15]{GT}, even under the weaker assumption of $\Ga\in C^{1,1}$, we can obtain for $u$ the bound
$$
\nr \na^2 u\nr_{q,\Om}\le C \ \mbox{ for } \ N<q<\infty,
$$
where $C$ only depends on $N, q$, $|\Om|$, and the regularity $\Om$ (and may blow up as $q\to\infty$). It is well known that $\Ga$ is of class $C^{1,1}$ if and only if it satisfies both the interior and exterior ball condition. Thus, we can claim that $C$ only depends on $N, q, d_\Om, r_i$, and $r_e$.

\par
(ii) When $\Ga$ is of class $C^{2,\ga}$ with $0<\ga\le 1$, we can obtain an a priori bound also for $\nr \na^2 h \nr_{\infty,\Om}$, by standard Schauder's estimates for $\na^2 u$ (see \cite{GT}), in terms of the $C^{2, \ga}$-modulus of continuity $\om_{2,\ga}$ of $\Ga$. (For a definition of $\om_{2,\ga}$, see e.g. \cite[Remark 1]{ABR}.)
}
\end{rem}

The following theorem clearly gives \eqref{new-bound}.

\begin{thm}[Soap Bubble Theorem: enhanced stability] 
\label{thm:SBT-improved-stability}
Let $N\ge 2$ and let $\Om\subset\RR^N$ be a bounded domain with boundary $\Ga$ of class $C^{2}$. Denote by $H$ the mean curvature of $\Ga$ and set $R=N | \Om |/|\Ga|$ and $H_0=1/R$.
\par
Let $z \in \Om$ be a global minimum point of the solution $u$ of \eqref{serrin1} and let $\rho_i$ and $\rho_e$ be defined by \eqref{def-rhos}.
Then, the following inequalities hold true.
\begin{enumerate}[(i)]
\item 
If $2\le N\le 4$, there exists a constant $c = c( N, d_\Om, r_i, r_e )$ such that
\begin{equation}
	\label{eq:final-estimate SBT for C2 rhoei}
	\rho_e - \rho_i  \le 
	c\,\begin{cases}
		\nr H_0 - H \nr_{2, \Ga},  \ &\mbox{if $N=2, 3$}, \\
\nr H_0 - H \nr_{2, \Ga} \max \left[ \log \left( \frac{1  }{ \nr H_0 - H \nr_{2, \Ga} } \right) , 1 \right],   \ &\mbox{if $N=4$}.
	\end{cases}
\end{equation}
\item
 If $N\ge 5$, for any $q\in(N,\infty)$,  there exists a constant $c = c( N,q, d_\Om, r_i, r_e )$ such that
\begin{equation}
\label{stability-SBT-C2-N-5}
\rho_e - \rho_i  \le c\,
		\nr H_0 - H \nr_{2, \Ga}^{\frac{4}{N}-\frac{2(N-4)}{N (q-2)} }.
\end{equation}
\end{enumerate}
Moreover, (for any $N\ge 2$) we have that
\begin{equation}\label{eq:SBT mappa Gauss stability}
R\,\left\nr \nu-\frac{\na Q^z}{R}\right\nr_{2,\Ga} \le c\, \nr H_0-H\nr_{2,\Ga} .
\end{equation}

If $\Ga$ is of class $C^{2,\ga}$, $0<\ga\le 1$, the exponent  in \eqref{stability-SBT-C2-N-5} can be replaced by its limit as $q\to\infty$, i.e. $4/N$.
In this case, the relevant constant $c$ only depends on $N$, $d_\Om$, and the $C^{2,\ga}$-modulus of continuity of $\Ga$.
\end{thm}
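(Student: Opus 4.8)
The statement bundles two rather different ingredients. The first is a \emph{reduction}, supplied by Theorem \ref{thm:SBT-W22-stability}, Corollary \ref{cor:SBT-W22-stability} and Corollary \ref{cor:bound-for-gradient} of the present paper, which expresses $\rho_e-\rho_i$ and $R\,\nr\nu-\na Q^z/R\nr_{2,\Ga}$ in terms of the Hessian norm $\nr\na^2 h\nr_{2,\Om}$ (and, for $N\ge 4$, of $\nr\na h\nr_{\infty,\Om}$ and of a higher $L^q$-norm of $\na^2 h$). The second is the conversion of $\nr\na^2 h\nr_{2,\Om}$ into $\nr H_0-H\nr_{2,\Ga}$, obtained via the integral identities \eqref{H-fundamental}--\eqref{identity-MP} and the trace inequality $\int_\Ga|\na h|^2\,dS_x\le c\int_\Om(-u)\,|\na^2 h|^2\,dx$, along the lines of \cite{MP1,MP3}. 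Accordingly, the plan is to establish first the \emph{master bound}
\[
\nr\na^2 h\nr_{2,\Om}+\nr u_\nu-R\nr_{2,\Ga}+\nr\na h\nr_{2,\Ga}\le c\,\nr H_0-H\nr_{2,\Ga},\qquad c=c(N,d_\Om,r_i,r_e),
\]
and then to feed it into the reduction, dimension by dimension.

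For the master bound one starts from \eqref{H-fundamental}: since both summands on its left-hand side are nonnegative, each of them is bounded by $\int_\Ga(H_0-H)\,u_\nu^2\,dS_x$. Writing $u_\nu^2=R^2+2R(u_\nu-R)+(u_\nu-R)^2$, using that $r_i\le u_\nu\le M$ on $\Ga$ (Hopf's lemma together with \eqref{bound-M}), that $u_\nu-R$ has zero mean on $\Ga$ (which is just $N|\Om|=\int_\Ga u_\nu\,dS_x$, and also allows one to replace $H_0$ by the mean of $H$ in the linear term), Cauchy--Schwarz, and the control of the ``curvature gap'' $R^2\int_\Ga(H_0-H)\,dS_x$ as in \cite{MP1,MP3}, one gets $\int_\Ga(H_0-H)\,u_\nu^2\,dS_x\le c\,|\Ga|\,\nr H_0-H\nr_{2,\Ga}\,\nr u_\nu-R\nr_{2,\Ga}$. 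Inserting this into the bound for $\frac{1}{R}\int_\Ga(u_\nu-R)^2\,dS_x$ and cancelling a power of $\nr u_\nu-R\nr_{2,\Ga}$ gives the linear estimate for $\nr u_\nu-R\nr_{2,\Ga}$; reinserting it into \eqref{H-fundamental} and normalizing as in Remark \ref{rem:normalizations} gives the one for $\nr\na^2 h\nr_{2,\Om}$. Finally, \eqref{identity-MP} together with $|h_\nu|\le|\na h|$, $u_\nu+R\le M+R$, Cauchy--Schwarz and the trace inequality yields $\int_\Ga|\na h|^2\,dS_x\le c\int_\Ga(u_\nu-R)^2\,dS_x$, hence the remaining term. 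The main obstacle is precisely this \emph{linear} (rather than $1/2$-H\"older) dependence: the crude estimate $\int_\Ga(H_0-H)u_\nu^2\,dS_x\le M^2\nr H_0-H\nr_{1,\Ga}\le c\,\nr H_0-H\nr_{2,\Ga}$ gives only $\nr\na^2 h\nr_{2,\Om}\le c\,\nr H_0-H\nr_{2,\Ga}^{1/2}$, i.e.\ a square-root profile; the linear one hinges on the expansion of $u_\nu$ about its mean $R$ and on the treatment of the curvature gap borrowed from \cite{MP1,MP3}.

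The rest is assembly. Estimate \eqref{eq:SBT mappa Gauss stability} is immediate: on $\Ga$ one has $\na u=u_\nu\,\nu$, hence $|R\,\nu-\na Q^z|\le|R-u_\nu|+|\na h|$ pointwise there, and the master bound finishes it in every dimension. For $\rho_e-\rho_i$ with $N=2,3$, Theorem \ref{thm:SBT-W22-stability}(i) gives $\rho_e-\rho_i\le c\,\nr\na^2 h\nr_{2,\Om}$, hence the linear profile. For $N=4$, Theorem \ref{thm:SBT-W22-stability}(ii) with $\nr\na h\nr_{\infty,\Om}\le M+d_\Om$ (Remark \ref{rem:grad-h-hessian-h}) and the master bound give $\rho_e-\rho_i\le c\,\nr H_0-H\nr_{2,\Ga}\max[\log(c/\nr H_0-H\nr_{2,\Ga}),1]$; one concludes using the monotonicity of $t\mapsto t\max[\log(A/t),1]$, absorbing the constant into the logarithm when $\nr H_0-H\nr_{2,\Ga}$ is small and using $\rho_e-\rho_i\le d_\Om$ otherwise.

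For $N\ge 5$ and $\Ga\in C^2$, the interior $L^q$-estimates for $\De u=N$ give an a priori bound $\nr\na^2 h\nr_{q,\Om}\le C(N,q,d_\Om,r_i,r_e)$ (Remark \ref{rem:schauder-calderon-zygmund}(i)), and feeding this and the master bound into Corollary \ref{cor:SBT-W22-stability} yields \eqref{stability-SBT-C2-N-5}. For $\Ga\in C^{2,\ga}$, Schauder estimates give an a priori bound for $\nr\na^2 h\nr_{\infty,\Om}$ depending on $N$, $d_\Om$ and the $C^{2,\ga}$-modulus of $\Ga$; then Corollary \ref{cor:bound-for-gradient} with $p=2$ and $q=\infty$ gives $\nr\na h\nr_{\infty,\Om}\le c\,\nr\na^2 h\nr_{2,\Om}^{2/N}$, so Theorem \ref{thm:SBT-W22-stability}(iii) becomes $\rho_e-\rho_i\le c\,\nr\na h\nr_{\infty,\Om}^{(N-4)/(N-2)}\nr\na^2 h\nr_{2,\Om}^{2/(N-2)}\le c\,\nr\na^2 h\nr_{2,\Om}^{4/N}$, and the master bound gives the exponent $4/N$ (equivalently, let $q\to\infty$ in the $C^2$ case). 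Granting the linear master bound, this chain through Theorem \ref{thm:SBT-W22-stability} and Corollary \ref{cor:bound-for-gradient}---in particular, the new possibility of controlling $\nr\na h\nr_{\infty,\Om}$ by $\nr\na^2 h\nr_{2,\Om}$---is exactly what upgrades the exponent to $4/N$ for $N\ge 4$ and sharpens the previously known profiles.
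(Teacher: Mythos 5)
Your proposal is correct and follows essentially the same strategy as the paper: (a) reduce $\rho_e-\rho_i$ (and the Gauss-map deviation) to $\nr\na^2 h\nr_{2,\Om}$, $\nr\na h\nr_{\infty,\Om}$, and higher $L^q$-norms of $\na^2 h$ via Theorem \ref{thm:SBT-W22-stability}, Corollary \ref{cor:SBT-W22-stability} and Corollary \ref{cor:bound-for-gradient}; (b) establish the linear ``master bound'' $\nr\na^2 h\nr_{2,\Om}+\nr u_\nu-R\nr_{2,\Ga}+\nr\na h\nr_{2,\Ga}\le c\,\nr H_0-H\nr_{2,\Ga}$ from the identity \eqref{H-fundamental}, the trace inequality and \eqref{identity-MP}; (c) assemble dimension by dimension, absorbing $\nr\na h\nr_{\infty,\Om}\le M+d_\Om$ for $N=4$ and invoking Calder\'on--Zygmund or Schauder a priori bounds from Remark \ref{rem:schauder-calderon-zygmund} for $N\ge5$. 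Your derivation of the $4/N$ exponent via Corollary \ref{cor:bound-for-gradient} with $q=\infty$ checks out ($\frac{2(N-4)}{N(N-2)}+\frac{2}{N-2}=\frac{4}{N}$) and is equivalent to letting $q\to\infty$ in Corollary \ref{cor:SBT-W22-stability}.

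The one place where you diverge (mildly) from the paper is in deriving the master bound. You start from the original right-hand side $\int_\Ga(H_0-H)u_\nu^2\,dS_x$ of \eqref{H-fundamental}, expand $u_\nu^2=R^2+2R(u_\nu-R)+(u_\nu-R)^2$, and defer the ``curvature gap'' $R^2\int_\Ga(H_0-H)\,dS_x$ to the cited references. The paper instead starts directly from a \emph{modified} identity (from \cite{MP3}) whose right-hand side is $-\int_\Ga(H_0-H)h_\nu u_\nu\,dS_x+\int_\Ga(H_0-H)(u_\nu-R)Q^z_\nu\,dS_x$, so that the product structure $\nr H_0-H\nr_{2,\Ga}\bigl(\nr h_\nu\nr_{2,\Ga}+\nr u_\nu-R\nr_{2,\Ga}\bigr)$ is manifest from Cauchy--Schwarz and no separate treatment of $\int_\Ga(H_0-H)\,dS_x$ is needed. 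The two paths are algebraically equivalent --- your approach implicitly needs the Minkowski-type cancellation $\int_\Ga(H_0-H)Q^z_\nu\,dS_x=0$, which is exactly what turns the modified identity into \eqref{H-fundamental} --- but the paper's version avoids an extra step that your sketch leaves to the reader. Spelling out that cancellation (or simply citing the modified identity) would close the small gap you acknowledge when you write ``as in \cite{MP1,MP3}.''
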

\begin{proof}

Inequalities \eqref{eq:final-estimate SBT for C2 rhoei} and \eqref{stability-SBT-C2-N-5} will simply follow from the inequality:
\begin{equation}
\label{eq:inproofSBTpassaggioquasifinal}
\nr \na ^2 h\nr_{2,\Om}\le c\,\nr H-H_0\nr_{2,\Ga}.
\end{equation}
This was proved in \cite{MP3}. 
\par
For the reader's convenience,  we summarize the main steps in the proof of \cite[Theorem 3.5]{MP3}, which lead to \eqref{eq:inproofSBTpassaggioquasifinal}, with the necessary modifications. As usual, the constant $c$ may change from line to line and only depends on quantities (e.g., $R$, $\nr u_\nu \nr_{\infty, \Ga}$, $\nr Q^z_\nu \nr_{\infty, \Ga}$) that, in turn, can be bounded in terms of the parameters indicated in the statement.
\par
The starting point is a modification of the fundamental identity \eqref{H-fundamental}:
\begin{multline*}
\label{identity-SBT-h}
\frac1{N-1}\int_{\Om} |\na ^2 h|^2 dx+
\frac1{R}\,\int_\Ga (u_\nu-R)^2 dS_x = \\
-\int_{\Ga}(H_0-H)\,h_\nu\,u_\nu\,dS_x+
\int_{\Ga}(H_0-H)\, (u_\nu-R)\,Q^z_\nu\, dS_x.
\end{multline*}
\par
Next, if we discard the first summand in this identity, by Cauchy-Schwarz inequality we obtain that
\begin{equation}
\label{ineq:deviation-u-nu}
\nr u_\nu -R\nr_{2,\Ga}^2\le c\,\nr H-H_0\nr_{2,\Ga} \bigl(\nr h_\nu\nr_{2,\Ga}+\nr u_\nu -R\nr_{2,\Ga}\bigr).
\end{equation}
Instead, if we discard the second summand, we can infer that
\begin{equation}
\label{ineq:hessian-h}
\int_{\Om} |\na ^2 h|^2 dx\le c\,\nr H-H_0\nr_{2,\Ga} \bigl(\nr h_\nu\nr_{2,\Ga}+\nr u_\nu -R\nr_{2,\Ga}\bigr).
\end{equation}
\par
Now, we use the fact that we can control $\na h$ (and hence $h_\nu$) on $\Ga$ in terms of the deviation $u_\nu-R$. This is obtained by combining a trace-type inequality for $h$ derived in \cite[Lemma 2.5]{MP3} and identity \eqref{identity-MP}, as follows:
\begin{multline*}
\int_\Ga|\na h|^2 dS_x\le c\,\int_\Om (-u)\, |\na^2 h|^2\,dx=\frac12\,c\,\int_\Ga (u_\nu^2-R^2)\,h_\nu\,dS_x\le \\
c\,\nr u_\nu-R\nr_{2,\Ga} \nr h_\nu\nr_{2,\Ga}\le c\,\nr u_\nu-R\nr_{2,\Ga}\, \nr \na h\nr_{2,\Ga}.
\end{multline*}
 This then gives:
\begin{equation}
\label{trace-grad-h-serrin-deviation}
\nr h_\nu\nr_{2,\Ga}\le\nr \na h\nr_{2,\Ga}\le c\,\nr u_\nu -R\nr_{2,\Ga}.
\end{equation}
\par
Thus, inserting this inequality into \eqref{ineq:deviation-u-nu} gives that 
\begin{equation}
\label{bound-u-nu-curvature}
\nr u_\nu -R\nr_{2,\Ga}\le c\,\nr H-H_0\nr_{2,\Ga}.
\end{equation}
Also, by plugging it into \eqref{ineq:hessian-h}, we infer that
$$
\int_{\Om} |\na ^2 h|^2 dx\le c\,\nr H-H_0\nr_{2,\Ga} \nr u_\nu -R\nr_{2,\Ga}\le c\,\nr H-H_0\nr_{2,\Ga}^2.
$$
Therefore, 
\eqref{eq:inproofSBTpassaggioquasifinal}
follows at once.

\medskip

Now, we proceed to prove \eqref{eq:final-estimate SBT for C2 rhoei} and \eqref{stability-SBT-C2-N-5}. The cases $N=2, 3$ easily follow from Theorem \ref{thm:SBT-W22-stability}. Thus, we are left to prove it  for $N\ge 4$.
\par
For $N=4$, we simply combine Theorem \ref{thm:SBT-W22-stability} and the first part of Remark \ref{rem:grad-h-hessian-h}. Indeed, $\nr\na h\nr_{\infty,\Om}$ is bounded by a constant which only depends on $r_i, r_e$, and $d_\Om$.
\par
For $N\ge 5$,
instead, we use Corollary \ref{cor:SBT-W22-stability} and Remark \ref{rem:schauder-calderon-zygmund}, which give
$$
\rho_e - \rho_i \le  
c\, \nr \na^2 h \nr_{2,\Om}^{\frac{4}{N}-\frac{2(N-4)}{N (q-2)} }.
$$
Hence, \eqref{stability-SBT-C2-N-5} ensues  from \eqref{eq:inproofSBTpassaggioquasifinal}.
The case in which $\Ga$ is of class $C^{2,\ga}$ can be dealt similarly.
\par
To conclude the proof, we are left to show that \eqref{eq:SBT mappa Gauss stability} also holds.
To this aim, as done in the introduction, we observe that 
$$
\left|\nu(x)-\frac{x-z}{R}\right|\le 
\frac{|R-u_\nu(x)|+|\na h(x)|}{R} \ \mbox{ for } \ x\in\Ga.
$$
Hence, we infer that
\begin{equation*}\label{eq:vicinanza Gauss map per Serrin}
R \left(\int_\Ga\left|\nu(x)-\frac{x-z}{R}\right|^2 \frac{dS_x}{|\Ga|}\right)^{1/2}
\le \nr u_\nu-R\nr_{2,\Ga}+\nr \na h\nr_{2,\Ga} \le c \, \nr u_\nu-R\nr_{ 2 ,\Ga} ,
\end{equation*}
where we applied the triangle inequality and
the second inequality in
\eqref{trace-grad-h-serrin-deviation}. By using \eqref{bound-u-nu-curvature}, then \eqref{eq:SBT mappa Gauss stability} easily
follows from the last inequality above.
\end{proof}

\begin{rem}
\label{rem:old-stability}
{\rm
In order to compare the results of Theorem \ref{thm:SBT-improved-stability} to previous estimates, we recall what we obtained in \cite[Theorem 3.5]{MP3} --- the last up-to-date bound for stability in the Soap Bubble Theorem. In fact, there we obtained the bound
$$
\rho_e-\rho_i\le c\,\Psi\left(\nr H-H_0\nr_{L^2(\Ga)}\right),
$$
with
\begin{equation*}
\label{def-Psi}
\Psi(\si)=\begin{cases}
\si \ &\mbox{ if } \ N=2, 3, \\
\si^{1-\ve} \ &\mbox{ if } \ N=4, \\
\si^{2/(N-2)}  \ &\mbox{ if } \ N\ge 5,
\end{cases}
\end{equation*}
where the case $N=4$ must be interpreted thus: for any $0<\ve<1$, there exists a constant $c=c_\ve$ (which may blow up as $\ve\to 0$), such that case $N=4$ holds.
Theorem \ref{thm:SBT-improved-stability} clearly improves these profiles if $\Ga$ is either of class $C^2$ or $C^{2,\ga}$. Moreover, it also states that we can control linearly the deviation of the Gauss map from that of a sphere, at least in the $L^2$-norm.
}
\end{rem}

\smallskip

\section{Application to quantitative symmetry \\
in Serrin's overdetermined problem
}

In order to obtain stability estimates for Serrin's problem, we must use identity \eqref{identity-MP}. In fact, this relates the weighted integral at the right-hand side to the deviation $u_\nu-R$. Since the torsion $u$ can be easily bounded below by $\de_\Ga$ (see \cite[Lemma 3.1]{MP2}), we understand that  this time we must associate the difference $\rho_e - \rho_i$ with the weighted $L^2$-norm $\nr \de_\Ga^{1/2} \na^2 h \nr_{2, \Om}$. The following result goes in that direction.

\begin{thm}
	\label{thm:serrin-W22-stability} 
	Let $\Om\subset\RR^N$, $N\ge 2$, be a bounded domain with boundary $\Ga$ of class $C^{2}$
	and $z \in \Om$ be a global minimum point of the solution $u$ of \eqref{serrin1}.
	Consider the function  $h=Q^z-u$, with $Q^z$ given by \eqref{quadratic}.
	Then, there exists a constant $c = c (N, d_\Om, r_i , r_e)$ such that
	\begin{equation*}
		\label{W22-Serrin-stability-cases}
		\rho_e - \rho_i  \le 
		c\,\begin{cases}
			\nr \de_\Ga^{1/2} \na^2 h \nr_{2,\Om}   \ &\mbox{if $N=2$}; \\
			\displaystyle  \nr \de_\Ga^{1/2} \na^2 h \nr_{2,\Om} \max \left[ \log \left( \frac{e \, \nr \na h \nr_{ \infty, \Om}  }{  \nr \de_\Ga^{1/2} \na^2 h \nr_{2,\Om}  } \right) ,1 \right]  \ &\mbox{if $N=3$}; \vspace{3pt} \\
			\nr \na h \nr_{\infty, \Om}^{(N-3)/(N-1)} \nr \de_\Ga^{1/2} \na^2 h \nr_{2,\Om}^{2/(N-1)}   \ &\mbox{if $N \ge 4$.}
		\end{cases}
	\end{equation*}
\end{thm}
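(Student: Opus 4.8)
The plan is to mimic, step by step, the proof of Theorem~\ref{thm:SBT-W22-stability}, the only change being that the \emph{unweighted} Poincar\'e inequality of Remark~\ref{rem:weighted-grad} (the case $\al=0$) is replaced by its \emph{weighted} version with $\al=1/2$ and $p=2$, namely
\[
\nr\na h\nr_{r,\Om}\le c\,\nr\de_\Ga^{1/2}\na^2 h\nr_{2,\Om},\qquad 2\le r\le r_\ast:=\frac{2N}{N-1},
\]
where $c$ is controlled by $N,r,d_\Om,r_i,r_e$. Remark~\ref{rem:weighted-grad} applies because $z$, being a global minimum of $u$ in $\Om$, is an interior critical point of $h=Q^z-u$; moreover $h\in W^{1,\infty}(\Om)$ since $Q^z$ is smooth and $u\in C^1(\ol\Om)$, so Lemma~\ref{lem:intermediate h} is at our disposal. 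The whole argument rests on the elementary trichotomy $r_\ast>N$ if $N=2$, $r_\ast=N$ if $N=3$, $r_\ast<N$ if $N\ge4$, which is exactly what produces the three regimes of the statement.

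When $N=2$, choose any $r$ with $N<r\le r_\ast$, say $r=r_\ast=4$, and combine the first case of Lemma~\ref{lem:intermediate h} with the weighted Poincar\'e inequality to get $\rho_e-\rho_i\le c\,\nr\na h\nr_{4,\Om}\le c\,\nr\de_\Ga^{1/2}\na^2 h\nr_{2,\Om}$. When $N\ge4$, since $r_\ast<N$, apply the third case of Lemma~\ref{lem:intermediate h} with exponent $r_\ast$; a one-line computation gives $(N-r_\ast)/N=\tfrac{N-3}{N-1}$ and $r_\ast/N=\tfrac{2}{N-1}$, so that
\[
\rho_e-\rho_i\le c\,\nr\na h\nr_{\infty,\Om}^{(N-3)/(N-1)}\,\nr\na h\nr_{r_\ast,\Om}^{2/(N-1)}\le c\,\nr\na h\nr_{\infty,\Om}^{(N-3)/(N-1)}\,\nr\de_\Ga^{1/2}\na^2 h\nr_{2,\Om}^{2/(N-1)},
\]
again by the weighted Poincar\'e inequality; this is the claimed bound.

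The borderline dimension $N=3$ is treated exactly as the case $N=4$ of Theorem~\ref{thm:SBT-W22-stability}. Here $r_\ast=N=3$, so the $p=N$ case of Lemma~\ref{lem:intermediate h} gives
\[
\rho_e-\rho_i\le c\,\nr\na h\nr_{3,\Om}\,\log\!\Big(\frac{e\,\nr\na h\nr_{\infty,\Om}}{\nr\na h\nr_{3,\Om}}\Big),
\]
and this logarithm is $\ge1$ because for the normalized measure $\nr\na h\nr_{\infty,\Om}\ge\nr\na h\nr_{3,\Om}$. Setting $g(t)=t\max\{\log(A/t),1\}$ with $A=e\,\nr\na h\nr_{\infty,\Om}$, one has $g$ nondecreasing and $g(\kappa t)\le\kappa\,g(t)$ for $\kappa\ge1$; since $\nr\na h\nr_{3,\Om}\le c\,\nr\de_\Ga^{1/2}\na^2 h\nr_{2,\Om}$ by the weighted Poincar\'e inequality, substituting this into $g$ produces
\[
\rho_e-\rho_i\le c\,\nr\de_\Ga^{1/2}\na^2 h\nr_{2,\Om}\,\max\!\Big[\log\!\Big(\frac{e\,\nr\na h\nr_{\infty,\Om}}{\nr\de_\Ga^{1/2}\na^2 h\nr_{2,\Om}}\Big),1\Big],
\]
which is the $N=3$ case of the statement.

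In all three regimes the constant $c$ depends only on $N$, on the exponents chosen (which are themselves functions of $N$ alone), on $|\Om|$ --- hence on $d_\Om$ --- and on the weighted-Poincar\'e constant of Remark~\ref{rem:weighted-grad}, bounded in terms of $N,d_\Om,r_i,r_e$; this yields $c=c(N,d_\Om,r_i,r_e)$ as asserted. I do not expect any genuine obstacle: the only points needing attention are locating the critical exponent $r_\ast=2N/(N-1)$ of the $\de_\Ga^{1/2}$-weighted Sobolev embedding relative to $N$, and carrying out the logarithmic bookkeeping at the endpoint $N=3$ via the monotone-function trick, both of which are settled by the template of Theorem~\ref{thm:SBT-W22-stability}.
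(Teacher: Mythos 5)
Your proof is correct and follows essentially the same route as the paper's: in each dimension the paper likewise pairs Lemma \ref{lem:intermediate h} with the weighted Poincar\'e inequality of Remark \ref{rem:weighted-grad} for $\al=1/2$, $p=2$, at the critical exponent $r=2N/(N-1)$ (which is $>N$, $=N$, $<N$ for $N=2$, $N=3$, $N\ge 4$ respectively), and handles the $N=3$ logarithmic endpoint via the same monotone-function substitution used in Theorem \ref{thm:SBT-W22-stability}. Your exponent computations and the bookkeeping of the constant's dependence on $N, d_\Om, r_i, r_e$ match the paper's argument.
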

\begin{proof}
	(i) Let $N=2$.
	By using Lemma \ref{lem:intermediate h} with $p=4$ we have that
	$$
	\rho_e - \rho_i \le
	c 
	\, \nr \na h \nr_{4, \Om} .
	$$
	By applying Remark \ref{rem:weighted-grad} with $r=4$, $p=2$, and $\al= 1/2$, we obtain that
	\begin{equation*}
		\nr \na h \nr_{4, \Om} \le c \, \nr \de_\Ga^{1/2} \na^2 h  \nr_{2, \Om} ,
	\end{equation*}
	and the conclusion follows.
	
	(ii) Let $N=3$.
	By using Remark \ref{rem:weighted-grad} with $r=3$, $p=2$, $\al= 1/2$, we get
	$$
	\nr \na h \nr_{3 , \Om} \le c  \, \nr \de_\Ga^{1/2} \na^2 h \nr_{2,\Om}.
	$$
The conclusion follows by using Lemma \ref{lem:intermediate h} with $p=N=3$.
	
	(iii) When $N\ge 4$, we use Lemma \ref{lem:intermediate h} with $p=2N/(N-1)$ and put it together with Remark \ref{rem:weighted-grad} with $r={\frac{2N}{N-1}}$, $p=2$, $\al=1/2$.
\end{proof}

Theorem \ref{thm:serrin-W22-stability}  is sufficient to obtain a better stability bound for Serrin's problem when $N=3$.
Now, to gain better stability for $N\ge 4$, we need to obtain a bound similar to that in Corollary \ref{cor:bound-for-gradient}, but with $\nr \na^2 h \nr_{p,\Om}$ replaced by $\nr \de_\Ga^{1/2} \na^2 h \nr_{p,\Om}$.
 This time, we proceed differently.
\begin{lem}
		\label{lem:NEW_bound-for-gradient_using_old estimates}
	Set $1 \le p\le\infty$ and $q>N$.
 Let $\Om\subset\RR^N$, $N\ge 2$, be a bounded domain with boundary $\Ga$ of class $C^{2}$ and assume that $h\in W^{2,q}(\Om)$.
Then, there exists a constant $c = c (N, p,q)$ such that
		\begin{equation}\label{eq:NEW gradient bound for h using old estimates}
			\nr \na h \nr_{\infty, \Om}^{N+p\,(1-N/q)} \le
			c\,|\Om|\,\nr\na h \nr_{ p, \Om}^{p\, (1-N/q)} \nr \na^2 h\nr_{q, \Om}^N .
		\end{equation}
%
%
	\end{lem}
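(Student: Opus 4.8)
The plan is to reduce the inequality to a Gagliardo--Nirenberg--type interpolation for $\na h$ alone, evaluated where $|\na h|$ is largest, and to obtain that interpolation by running the cone machinery of Section~\ref{sec: interpolating estimates} at a \emph{free} scale. Since $q>N$ forces $\na h\in C^0(\ol{\Om})$, I would pick $x_0\in\ol{\Om}$ realizing $\max_{\ol{\Om}}|\na h|$ (if $\na h\equiv 0$ there is nothing to prove) and set $\ell=\na h(x_0)/|\na h(x_0)|$. With $g=h_\ell=\ell\cdot\na h\in W^{1,q}(\Om)$ one has $g(x_0)=\nr\na h\nr_{\infty,\Om}$, $|g|\le|\na h|$, and $|\na g|\le|\na^2 h|$ a.e. Since $\Ga\in C^2$, \eqref{eq:sferainterna allora conointerno} gives $\Om$ the uniform interior $(\sqrt2/2,r_i)$-cone condition, hence a cone $\cC_{x_0}=\cC_{x_0,r_i}\subset\Om$ with vertex $x_0$, all of whose subcones $\cC_{x_0,\rho}$, $0<\rho\le r_i$, lie in $\Om$.

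For each such $\rho$ I would write $g(x_0)=\bigl(g(x_0)-g_{\cC_{x_0,\rho}}\bigr)+g_{\cC_{x_0,\rho}}$. Corollary~\ref{cor:MorreySobolev} on $\cC_{x_0,\rho}$ (legitimate since $q>N$; used in its $W^{1,q}$ form by density), together with the scaling $|\cC_{x_0,\rho}|=|\cS_\te|\,\rho^N/N$ and the monotonicity of the integral to pass from the normalized norm on $\cC_{x_0,\rho}$ to that on $\Om$, gives $\bigl|g(x_0)-g_{\cC_{x_0,\rho}}\bigr|\le c_{N,q}\,\rho^{1-N/q}|\Om|^{1/q}\nr\na^2 h\nr_{q,\Om}$, while H\"older's inequality gives $\bigl|g_{\cC_{x_0,\rho}}\bigr|\le|\cC_{x_0,\rho}|^{-1/p}\|\na h\|_{L^p(\Om)}=c_N\,\rho^{-N/p}|\Om|^{1/p}\nr\na h\nr_{p,\Om}$. (Equivalently, one may start from \eqref{eq:lemma1-2} of Lemma~\ref{lem:primo passo} and split the Riesz potential exactly as in the proof of Lemma~\ref{lem:stima p-q gradiente}.) Hence, for every $\rho\in(0,r_i]$,
\[
\nr\na h\nr_{\infty,\Om}\le c_1\,\rho^{1-N/q}|\Om|^{1/q}\nr\na^2 h\nr_{q,\Om}+c_2\,\rho^{-N/p}|\Om|^{1/p}\nr\na h\nr_{p,\Om},
\]
with $c_1,c_2$ depending only on $N,p,q$.

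Finally I would minimize the right-hand side over $\rho$. The unconstrained minimizer $\rho_\ast$ balances the two terms and is an explicit monomial in $|\Om|^{1/p-1/q}\nr\na h\nr_{p,\Om}/\nr\na^2 h\nr_{q,\Om}$ (since $1+N/p-N/q>0$, the computation is legitimate); substituting it and collecting exponents gives $\nr\na h\nr_{\infty,\Om}\le c(N,p,q)\,|\Om|^{q/(p(q-N)+Nq)}\nr\na h\nr_{p,\Om}^{\lambda}\nr\na^2 h\nr_{q,\Om}^{1-\lambda}$ with $\lambda=p(q-N)/(p(q-N)+Nq)$, and raising to the power $(p(q-N)+Nq)/q=N+p(1-N/q)$ yields \eqref{eq:NEW gradient bound for h using old estimates}. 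The main obstacle is that this minimization is \emph{constrained} to $\rho\le r_i$: if $\rho_\ast>r_i$, taking $\rho=r_i$ still leaves an $r_i$ factor. In that regime one uses that $z$ is a critical point of $h$: applying the Morrey--Sobolev bound \eqref{eq:MorreySobolev-Omega} to $g=h_\ell$ at $x_0$ and at $z$ and using $h_\ell(z)=0$ gives the complementary estimate $\nr\na h\nr_{\infty,\Om}\le c\,r_i^{1-N/q}|\Om|^{1/q}\nr\na^2 h\nr_{q,\Om}$; taking the geometric mean of the two bounds with the weights $\lambda$, $1-\lambda$ that cancel the powers of $r_i$ restores the clean inequality with $c=c(N,p,q)$. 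The rest is the routine cone-and-exponent bookkeeping already carried out for Theorems~\ref{thm:case p > N}--\ref{thm:p<N and p=N} and Corollary~\ref{cor:bound-for-gradient}.
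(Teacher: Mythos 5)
Your proposal is correct and follows essentially the same route as the paper's proof: a pointwise bound by the mean over a subcone of height $\rho$ plus the Morrey--Sobolev term on that subcone, then optimization in $\rho$ and the use of $h_\ell(z)=0$ to pass from an oscillation bound to a bound on $|\na h|$ at its maximizer. You are in fact more careful than the printed argument, which ``omits the details'' of the minimization constrained to $\rho\in(0,r_i]$: your handling of the regime $\rho_\ast>r_i$ (playing the complementary Morrey bound against the regime condition with weights $\lambda$, $1-\lambda$ so that the powers of $r_i$ cancel) is exactly the right way to close that step and keeps the constant depending only on $N$, $p$, $q$.
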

	\begin{proof}
For any $x\in\ol{\Om}$ there is a cone $\cC_{x,a}\subset\Om$. Applying \eqref{eq:MorreySobolev} with $p=q$ to any cone $\cC_{x,\si}\subset\cC_{x,a}$ gives that
$$
|f(x)|\le \int_{\cC_{x,\si}}|f|\,d\mu_y+c\,\si\,\| \na f \|_{q, \cC_{x,\si}}
\le \nr f\nr_{p,\cC_{x,\si}}+c\,\si\,\| \na f \|_{q, \cC_{x,\si}},
$$
where we used H\"older's inequality at the second inequality. Here, $c=c(N, q)$.
Thus, we have that
$$
\max_{\ol{\Om}} f - \min _{\ol{\Om}} f \le 2 \max_{\ol{\Om}}|f| \le
%
%
c\,\left(|\Om|^{1/p} \si^{-N/p}\nr f\nr_{p,\Om}+c\,|\Om|^{1/q}\si^{1-N/q}\,\| \na f \|_{q, \Om}\right),
$$
for every $\si\in (0,a)$, where in the second inequality we also used the monotonicity of Lebesgue's integral with respect to set inclusion.
Here, $c=c(N,p,q)$ (notice that the dependence on $\te$ can be dropped, since $\te=\sqrt{2}/2$, being as $\Ga$ of class $C^2$). We now minimize in $\si$ as done before. This time, we omit the details. We end up with the formula:
$$
\max_{\ol{\Om}} f - \min _{\ol{\Om}} f \le c\,|\Om|^{\frac1{N+p(1-N/q)}} \nr f\nr_{p,\Om}^{\frac{p (1-N/q)}{N+p(1-N/q)}}\nr\na f\nr_{q,\Om}^{\frac{N}{N+p(1-N/q)}}.
$$
This holds for any $x\in\ol{\Om}$ and $1\le p<q\le\infty$. 
By choosing $f$ as any directional derivative $h_\ell$ of $h$ and using that, with our choice of $z$, $|h_\ell  (x)|= | h_\ell(x) - h_\ell (z)|$, we thus get that
		$$
		| h_\ell (x)|^{N+p\,(1-N/q)} \le c\,|\Om|\,\nr h_\ell \nr_{ p, \Om}^{p\,(1-N/q)}
\nr \na h_\ell \nr_{q, \Om}^N
		\ \text{ for any } \ x \in \ol{\Om}.
		$$
\par
Hence, \eqref{eq:NEW gradient bound for h using old estimates} follows by observing that $|h_\ell|\le|\na h|$, $| \na h_\ell | \le | \na^2 h|$, and by choosing $\ell$  such that  $h_\ell (x) = |\na h (x)|$ and $x \in \Ga $ that maximizes $|\na h|$ on $\ol{\Om}$. 
\par
As for Corollary \ref{cor:bound-for-gradient}, the lemma could be stated for a domain satisfying an interior cone condition.
\end{proof}

\begin{cor}
\label{cor:nuovi bound gradiente per SBT e Serrin}
		Set $1\le p<2N$ and $q>N$. Under the assumptions of Lemma \ref{lem:NEW_bound-for-gradient_using_old estimates}, we have that
		%
		%
\begin{equation}
\label{eq:NEW_GRADIENTBOUNDFORSERRIN}
				\nr \na h \nr_{\infty, \Om}^{2N-p+2p\,(1-N/q)} \le
				c\, \nr \na^2 h \nr_{q,\Om}^{2N-p} \, \nr \de_\Ga^{ 1/2 } \na^2 h \nr_{p, \Om }^{2p\,(1-N/q)} .
			\end{equation}
Here, the constant $c$ only depends on $N$, $p$, $q$, $d_\Om$, $r_i$, and $r_e$. 
\end{cor}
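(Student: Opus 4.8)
The plan is to obtain \eqref{eq:NEW_GRADIENTBOUNDFORSERRIN} by feeding Lemma~\ref{lem:NEW_bound-for-gradient_using_old estimates} a suitably tuned exponent and then converting the resulting $L^r$-norm of $\na h$ into the weighted norm $\nr\de_\Ga^{1/2}\na^2 h\nr_{p,\Om}$ by means of the weighted Poincar\'e inequality of Remark~\ref{rem:weighted-grad}, used with $\al=1/2$. Accordingly, set
$$
\tilde p:=\frac{2Np}{2N-p},\qquad \la:=\frac{2N-p}{N}.
$$
Both quantities are positive precisely because $1\le p<2N$, and this very hypothesis is what makes $r=\tilde p$ an admissible choice in Remark~\ref{rem:weighted-grad}: for $\al=1/2$ one has $\frac{Np}{N-p(1-\al)}=\tilde p$ and $p(1-\al)=p/2<N$, so the chain $1\le p\le r\le\tilde p$ holds with $r=\tilde p$.

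First I would record, from Remark~\ref{rem:weighted-grad} applied with $r=\tilde p$ and $\al=1/2$ (and using $|\Om|\le|B|\,d_\Om^N$ in the explicit constant recalled there), the inequality
$$
\nr\na h\nr_{\tilde p,\Om}\le c\,\nr\de_\Ga^{1/2}\na^2 h\nr_{p,\Om},\qquad c=c(N,p,d_\Om,r_i,r_e).
$$
Next I would apply Lemma~\ref{lem:NEW_bound-for-gradient_using_old estimates} with its exponent $p$ replaced by $\tilde p$ (legitimate since $1\le\tilde p<\infty$), getting
$$
\nr\na h\nr_{\infty,\Om}^{N+\tilde p(1-N/q)}\le c\,|\Om|\,\nr\na h\nr_{\tilde p,\Om}^{\tilde p(1-N/q)}\,\nr\na^2 h\nr_{q,\Om}^{N},
$$
and then substitute the previous bound into the right-hand side, which yields
$$
\nr\na h\nr_{\infty,\Om}^{N+\tilde p(1-N/q)}\le c\,\nr\de_\Ga^{1/2}\na^2 h\nr_{p,\Om}^{\tilde p(1-N/q)}\,\nr\na^2 h\nr_{q,\Om}^{N}.
$$
Finally I would raise both sides to the power $\la>0$. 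The elementary identities $\la N=2N-p$ and $\la\tilde p=2p$ show that the left-hand exponent becomes $\la\bigl(N+\tilde p(1-N/q)\bigr)=2N-p+2p(1-N/q)$, the exponent of $\nr\de_\Ga^{1/2}\na^2 h\nr_{p,\Om}$ becomes $\la\,\tilde p(1-N/q)=2p(1-N/q)$, and the exponent of $\nr\na^2 h\nr_{q,\Om}$ becomes $\la N=2N-p$; this is precisely \eqref{eq:NEW_GRADIENTBOUNDFORSERRIN}.

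For the constant, the invocation of Lemma~\ref{lem:NEW_bound-for-gradient_using_old estimates} contributes a factor depending on $N,\tilde p,q$ (hence on $N,p,q$) times $|\Om|\le|B|\,d_\Om^N$, while the Poincar\'e step contributes a factor depending on $N,p,d_\Om,r_i,r_e$; since $\la$ depends only on $N,p$, raising to that fixed power preserves these dependencies and one arrives at $c=c(N,p,q,d_\Om,r_i,r_e)$, as claimed. The argument is essentially exponent bookkeeping once $\tilde p$ has been identified; the one place that genuinely needs care is verifying the admissibility of the endpoint exponent $r=\tilde p$ in Remark~\ref{rem:weighted-grad}, and that is exactly where the assumption $p<2N$ enters.
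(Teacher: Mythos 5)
Your proof is correct and follows the same route as the paper: apply Remark \ref{rem:weighted-grad} with $r=2Np/(2N-p)$, $\al=1/2$ and then Lemma \ref{lem:NEW_bound-for-gradient_using_old estimates} with its exponent replaced by $2Np/(2N-p)$. Your explicit check that $p<2N$ is exactly what makes this endpoint exponent admissible, and the raising to the power $(2N-p)/N$, are just the bookkeeping the paper leaves implicit.
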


\begin{proof}
We use Remark \ref{rem:weighted-grad} with $r$, $p$, and $\al$ replaced by $2pN/(2N-p)$, $p$, and $1/2$, respectively.
We thus get that
$$
\nr \na h \nr_{\frac{2pN}{2N-p} , \Om } \le c \, \nr \de_\Ga^{1/2} \na^2 h \nr_{p, \Om}.
$$
Therefore, \eqref{eq:NEW_GRADIENTBOUNDFORSERRIN} follows by combining this bound and 			\eqref{eq:NEW gradient bound for h using old estimates} with $p$ replaced by $2pN/( 2N-p )$.		
\end{proof}

\begin{thm}[Serrin's problem: enhanced stability]
\label{thm:Improved-Serrin-stability}
Let $\Om\subset\RR^N$, $N\ge2$, be a bounded domain with boundary $\Ga$ of class $C^{2}$ and set
$
R=N |\Om|/ | \Ga|.
$
\par
Let $u$ be the solution of problem \eqref{serrin1} and 
$z\in\Om$ be a global minimum point of $u$ in $\ol{\Om}$.
Then, 
there exists a constant $c = c( N, d_\Om, r_i, r_e)$ such that
$$
	\rho_e - \rho_i  \le 
	c\,\begin{cases}
	 \nr u_\nu - R \nr_{2,\Ga}   \ &\mbox{if $N=2$}; \\
		\displaystyle  \nr u_\nu - R \nr_{2,\Ga} \max \left[ \log \left( \frac{1  }{  \nr u_\nu - R \nr_{2,\Ga}  } \right) ,1 \right]  \ &\mbox{if $N=3$}.
	\end{cases}
$$
When $N\ge 4$, for any $q\in(N,\infty)$, there exists a constant $c= c( N,q, d_\Om, r_i, r_e)$ such that
\begin{equation}
	\label{eq:final-estimate Serrin for C2 rhoei}
\rho_e - \rho_i  \le c\,\nr u_\nu - R \nr_{2,\Ga}^{\frac{4-2N/q}{N+1-2N/q}}.
\end{equation}
Moreover (for any $N\ge 2)$,
\begin{equation}\label{eq:Serrin mappa Gauss stability}
	R\,\left\nr \nu-\frac{\na Q^z}{R}\right\nr_{2,\Ga} \le c \,  \nr u_\nu - R \nr_{2,\Ga},
\end{equation}
for some constant $c = c( N, d_\Om, r_i, r_e )$.
\par
If $\Ga$ is of class $C^{2,\ga}$, $0<\ga\le 1$, the stability exponent in \eqref{eq:final-estimate Serrin for C2 rhoei} for $N \ge 4$ can be replaced its limit as $q\to\infty$, i.e.  $4/(N+1)$.
In this case, $c$ only depends on $N$, $d_\Om$, and the $C^{2,\ga}$-modulus of continuity of $\Ga$.
\end{thm}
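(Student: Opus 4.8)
The plan is to funnel everything through the single ``concentrated'' estimate
\begin{equation*}
\nr \de_\Ga^{1/2}\na^2 h\nr_{2,\Om}\le c\,\nr u_\nu-R\nr_{2,\Ga},
\end{equation*}
and then to feed it into Theorem \ref{thm:serrin-W22-stability}, exactly in the spirit of the proof of Theorem \ref{thm:SBT-improved-stability}, but with the $\de_\Ga$-weighted Hessian norm in place of the plain one. To obtain the concentrated estimate I would start from identity \eqref{identity-MP}, combine it with the elementary lower bound $-u\ge c\,\de_\Ga$ in $\Om$ (which follows from \cite[Lemma 3.1]{MP2}) and with the trace-type inequality $\int_\Ga|\na h|^2\,dS_x\le c\int_\Om(-u)\,|\na^2 h|^2\,dx$ of \cite[Lemma 2.5]{MP3}. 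Writing $u_\nu^2-R^2=(u_\nu-R)(u_\nu+R)$, using that $u_\nu+R$ is bounded in terms of $N,d_\Om,r_e$ only (via \eqref{bound-M}) and that $|h_\nu|\le|\na h|$ on $\Ga$, the right-hand side of \eqref{identity-MP} is controlled by $c\,\nr u_\nu-R\nr_{2,\Ga}\,\nr\na h\nr_{2,\Ga}$. Coupling this with the trace inequality and with \eqref{identity-MP} once more closes a loop, yielding first $\nr\na h\nr_{2,\Ga}\le c\,\nr u_\nu-R\nr_{2,\Ga}$ and then $\int_\Om(-u)\,|\na^2 h|^2\,dx\le c\,\nr u_\nu-R\nr_{2,\Ga}^2$, whence the concentrated estimate follows via $-u\ge c\,\de_\Ga$.

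With the concentrated estimate at hand, the cases $N=2,3$ are immediate from Theorem \ref{thm:serrin-W22-stability}: for $N=2$ it is a direct substitution; for $N=3$ one uses in addition that $\nr\na h\nr_{\infty,\Om}\le M+d_\Om$ is bounded in terms of $N,d_\Om,r_i,r_e$ only, together with the monotonicity of $t\mapsto t\max[\log(A/t),1]$, to replace the logarithmic argument by $1/\nr u_\nu-R\nr_{2,\Ga}$. For $N\ge 4$ one cannot merely bound $\nr\na h\nr_{\infty,\Om}$ by a constant — that would only yield the weaker exponent $2/(N-1)$. Instead I would invoke Corollary \ref{cor:nuovi bound gradiente per SBT e Serrin} with $p=2$, which, once the a priori bound $\nr\na^2 h\nr_{q,\Om}\le C$ from Remark \ref{rem:schauder-calderon-zygmund} is inserted (recall $\na^2 h=I-\na^2 u$), gives $\nr\na h\nr_{\infty,\Om}\le c\,\nr\de_\Ga^{1/2}\na^2 h\nr_{2,\Om}^{\beta}$ with $\beta=4(1-N/q)/(2N-2+4(1-N/q))$. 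Substituting this into the $N\ge 4$ branch of Theorem \ref{thm:serrin-W22-stability} and simplifying the resulting exponent — a short computation shows that $\beta\,(N-3)/(N-1)+2/(N-1)$ equals $(4-2N/q)/(N+1-2N/q)$ — produces \eqref{eq:final-estimate Serrin for C2 rhoei}. When $\Ga$ is of class $C^{2,\ga}$, Schauder theory bounds $\nr\na^2 h\nr_{\infty,\Om}$, so the argument may be run with $q=\infty$, giving the exponent $4/(N+1)$.

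The Gauss-map estimate \eqref{eq:Serrin mappa Gauss stability} is then obtained exactly as in the proof of Theorem \ref{thm:SBT-improved-stability}: since $u=0$ on $\Ga$, one has $R\nu-\na Q^z=(R-u_\nu)\,\nu-\na h$ there, hence $|\nu-\na Q^z/R|\le(|R-u_\nu|+|\na h|)/R$; taking $L^2(\Ga)$-norms and using $\nr\na h\nr_{2,\Ga}\le c\,\nr u_\nu-R\nr_{2,\Ga}$ from the first step yields the claim.

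I expect the main obstacle to be that first step: extracting the concentrated estimate from \eqref{identity-MP} requires the self-improving ``loop'' between the identity and the trace inequality, and care is needed so that the weight $\de_\Ga$ (rather than $-u$) appears on the final left-hand side. A secondary source of bookkeeping is the $N\ge 4$ case, where one must track how the a priori $L^q$ bound for $\na^2 u$ enters the constant and degenerates as $q\to\infty$ in the merely $C^2$ (non-$C^{2,\ga}$) regularity class.
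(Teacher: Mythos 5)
Your proposal is correct and follows essentially the same route as the paper: derive the concentrated estimate $\nr\de_\Ga^{1/2}\na^2 h\nr_{2,\Om}\le c\,\nr u_\nu-R\nr_{2,\Ga}$ from identity \eqref{identity-MP}, the bound $-u\ge c\,\de_\Ga$, and the trace inequality \eqref{trace-grad-h-serrin-deviation}, then feed it into Theorem \ref{thm:serrin-W22-stability} together with Corollary \ref{cor:nuovi bound gradiente per SBT e Serrin} (for $N\ge 4$) and Remark \ref{rem:schauder-calderon-zygmund}, and finish the Gauss-map bound as in Theorem \ref{thm:SBT-improved-stability}. Your exponent computation for $N\ge 4$ is correct; the only inessential difference is that the paper simply cites the already-established \eqref{trace-grad-h-serrin-deviation} where you re-derive the self-improving loop.
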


\begin{proof}
It is sufficient to notice that, thanks to \eqref{identity-MP} and the pointwise inequality $\de_\Ga\le -2u/r_i$, we can infer that
$$
\nr\de_\Ga^{1/2} \na^2 h\nr_{2,\Om}^2\le c\,\int_\Om(-u)\,|\na^2 h|^2 dx\le
c\,\nr u_\nu-R\nr_{2,\Ga}\,\nr h_\nu\nr_{2,\Ga}.
$$
Thus, by \eqref{trace-grad-h-serrin-deviation}, we obtain that
$$
\nr\de_\Ga^{1/2} \na^2 h\nr_{2,\Om}\le c\,\nr u_\nu-R\nr_{2,\Ga}.
$$
Therefore, with this inequality in hand, we can proceed similarly to the proof of Theorem \ref{thm:SBT-improved-stability} by also taking into account Remark \ref{rem:schauder-calderon-zygmund}. 
For instance,  the claim for $N\ge 4$ simply follows from  Theorem \ref{thm:serrin-W22-stability} and Corollary \ref{cor:nuovi bound gradiente per SBT e Serrin} with $p=2$.
\par
The remaining claims follow from Theorem \ref{thm:serrin-W22-stability} at once.
\end{proof}

\begin{rem}
{\rm
In order to compare the results of Theorem \ref{thm:Improved-Serrin-stability} to previous estimates, it is sufficient to recall what we obtained in \cite[Theorem 3.1]{MP3} --- the last up-to-date bound for stability in Serrin's problem. In fact, there we obtained the bound
$$
\rho_e-\rho_i\le c\,\Psi\left(\nr u_\nu-R\nr_{L^2(\Ga)}\right),
$$
with
\begin{equation*}
\label{def-Psi}
\Psi(\si)=\begin{cases}
\si \ &\mbox{ if } \ N=2, \\
\si^{1-\ve} \ &\mbox{ if } \ N=3, \\
\si^{2/(N-1)}  \ &\mbox{ if } \ N\ge 4.
\end{cases}
\end{equation*}
The case $N=3$ must be interpreted thus: for any $0<\ve<1$ there exists a constant $c=c_\ve$ (which may blow up as $\ve\to 0$), such that case $N=3$ holds.
\par
The comparison with Theorem \ref{thm:Improved-Serrin-stability} is left to the reader.
\par
As already mentioned in the introduction for the Soap Bubble Theorem, if one adopts a stronger norm for the deviation $u_\nu-R$, linear stability can also be obtained in general dimension. See for instance \cite{GO}.
 }
\end{rem}

\begin{rem}
{\rm
A tedious inspection of the corresponding proofs tells us that the dependence of the relevant constant $c$ on the parameter $r_e$ can be removed whenever $\Ga$ is mean convex.
Notice that, in this case, the bound in \eqref{bound-M} can be replaced by \cite[Formula (2.4)]{MP3}.
}
\end{rem}

\section*{Acknowledgements}
Rolando Magnanini was partially supported by the Gruppo Nazionale per l'Analisi Matematica, la Probabilit\`a e le loro Applicazioni (GNAMPA) of the italian Istituto Nazionale di Alta Matematica (INdAM). 
Giorgio Poggesi is supported by the Australian Laureate Fellowship FL190100081 ``Minimal surfaces, free boundaries and partial differential equations'' and is member of AustMS and INdAM/GNAMPA.

\end{document}